\numberwithin{equation}{section}
\theoremstyle{plain}
\newtheorem{theorem}{Theorem}[section]
\newtheorem{proposition}[theorem]{Proposition}
\newtheorem{corollary}[theorem]{Corollary}
\newtheorem{lemma}[theorem]{Lemma}
\theoremstyle{definition}
\newtheorem{example}[theorem]{Example}
\newtheorem{remark}[theorem]{Remark}
\newtheorem{definition}[theorem]{Definition}
\newcommand{\Node}[1]{\makebox[3mm]{#1}}
\def\cvput#1[#2]{\pnode(#1,1){#1} \pscircle*(#1,1){.1} \rput(#1,.5){$#2$}}
\def\C{{\mathbb C}}
\def\R{{\mathbb R}}
\def\N{{\mathbb N}}
\def\Z{{\mathbb Z}}
\def\d{{\rm d}}    
\def\grp{{\rm grp}}    
\def\D{d_{q}}     
\def\G{G_{q}}    
\def\ra{r}         
\def\la{l}         
\def\F{\mathcal{F}_{\rm fin}(H)}   
\def\Fq{\mathcal{F}_{\rm fin}^{(q)}(H)}   
\def\H{H}        
\def\A{\text{\rm vN}(G_{q}(H_\R))}
\def\PD{\mathcal{P}^D}    
\def\NC{\mathcal{NC}}
\def\P{\mathcal{P}}
\def\Cr{\text{\normalfont cr}}     
\def\lcr{\text{\normalfont lcr}}    
\def\cs{\text{\normalfont cs}}    
\def\cnp{\text{\normalfont cnp}}   
\def\npssr{\text{\normalfont npssr}}  
\def\cover{\text{\normalfont cover}}  
\def\np{\text{\normalfont np}}      
\def\npr{\text{\normalfont npr}}    
\def\sr{\text{\normalfont sr}}        
\def\ssr{\text{\normalfont ssr}}     
\def\out{{\rm out}}         
\def\outsr{{\rm out \backslash sr}}     
\def\Pair{\text{\normalfont Pair}}      
\def\NPair{\text{\normalfont NPair}}    
\def\Sing{\text{\normalfont Sing}}     
\def\id{I} 
\newcommand{\e}{{\epsilon}}
\renewcommand{\epsilon}{\varepsilon}
\def\simcr{\overset{\rm cr}{\sim}}
\newcommand{\be}{\begin{equation}}
\newcommand{\ee}{\end{equation}}
\begin{document}

\title{Noncommutative probability  of type D}

\author{Marek Bo\.zejko\footnote{Instytut Matematyczny, Polska Akademia Nauk, Ulica \'Sniadeckich 8, 00-956 Warszawa, Poland. Email: bozejko@gmail.com}, 
Wiktor Ejsmont\footnote{
 Department of Mathematics and Cybernetics, 
Wroc\l aw University of Economics, ul.\ Komandorska 118/12 53-345 Wroc\l aw, Poland. Email: wiktor.ejsmont@gmail.com} %
 and Takahiro Hasebe\footnote{Department of Mathematics, Hokkaido University, North 10 West 8, Kita-ku, Sapporo 060-0810 Japan. Email: thasebe@math.sci.hokudai.ac.jp}
}
\date{}
\maketitle

\begin{abstract}
We construct a deformed Fock space and a Brownian motion coming from Coxeter groups of type D. The construction is analogous to that of the $q$-Fock space (of type A) and the 
$(\alpha,q)$-Fock space (of type B).  
\end{abstract}
\section{Introduction}
Several deformations of boson, fermion and full Fock spaces and Brownian motion have been proposed so far. Bo\.zejko and Speicher used Coxeter groups of type A to construct a deformed Fock space and Brownian motion \cite{BS91,BS94}. Bo\.zejko and Speicher also considered 
general (mainly finite) Coxeter groups in \cite{BS94}. We followed this idea in \cite{BEH15} and constructed an $(\alpha,q)$-Brownian motion on an $(\alpha,q)$-Fock space using Coxeter groups of type B.  
The commutation relation satisfied by the creation and annihilation operators on the $(\alpha,q)$-Fock space reads 
\be\label{WickB}
b_{\alpha,q}(x)b_{\alpha,q}^\ast(y)- q b_{\alpha,q}^\ast(y)b_{\alpha,q}(x)= \langle  x,y \rangle \id + \alpha \langle x, \overline{y} \rangle\, q^{2 N}
\ee
where $\alpha,q\in [-1,1]$, $N$ is the number operator, $x,y$ are vectors in an underlying Hilbert space $H$ and $y\mapsto \overline{y}$ is a selfadjoint involution on $H$.  The orthogonal polynomials associated to the distribution of the $(\alpha,q)$-Gaussian operator $b_{\alpha,q}(x)+b_{\alpha,q}^*(x)$ are called $q$-Meixner-Pollaczek polynomials satisfying the recurrence relation
\begin{equation}\label{recursion0}
t P_n^{(\alpha, q)}(t) = P_{n+1}^{(\alpha, q)}(t) +[n]_q(1 + \alpha q^{n-1})P_{n-1}^{(\alpha, q)}(t), \qquad n=0,1,2,\dots
\end{equation}
where $P_{-1}^{(\alpha, q)}(t)=0,P_0^{(\alpha, q)}(t)=1$.
 When $\alpha=0$ then we get $q$-Hermite orthogonal polynomials and when $q=0$ then we get the orthogonal polynomials associated to a symmetric free Meixner distribution \cite{BB}.
The moments of the Brownian motion of type B are given by
\begin{equation}\label{mixed moment}
\begin{split}
&\langle\Omega, (b_{\alpha,q}(x_1)+b_{\alpha,q}^\ast(x_1))\cdots (b_{\alpha,q}(x_{2n})+b_{\alpha,q}^\ast(x_{2n}))\Omega\rangle_{\alpha,q}\\
&\qquad\qquad=
\sum_{(\pi,f)\in \P_{2}^B(2 n)} \alpha^{\np(\pi,f)} q^{\Cr(\pi)+2\cnp(\pi,f)} \prod_{\substack{\{i<j\} \in \pi \\ f(\{i<j\})=1} }\langle x_i, x_j\rangle \prod_{\substack{\{i<j\} \in \pi\\ f(\{i<j\})=-1}} \langle x_i,\overline{x}_j\rangle, 
\end{split}
\end{equation}
where $\P_2^B(2 n)$ is the set of pair partitions of type B, $\np(\pi,f)$ is the number of negative pairs, $\Cr(\pi)$ denotes the number of the crossings of $\pi$ and $\cnp(\pi,f)$ is the number of covered negative pairs of $\pi$. For further information the reader is referred to \cite{BEH15} and the references therein.

The goal of this paper is to introduce a new deformation of the full Fock space, creation and annihilation operators and Brownian motion in terms of Coxeter groups of type D. Our strategy is to replace the Coxeter groups of type A or B in the previous works \cite{BS91,BEH15} by Coxeter groups of type D. 
Given this background, it seems natural to try to extend this theory to Coxeter groups of type C. But it is known that Coxeter groups of type B and type C are isomorphic, and 
one can check that for Coxeter groups of type C we get the orthogonal polynomials \eqref{recursion0} (i.e.\ we get the same probability measure as in the type $B$). This also follows from the fact that  $\sum_{\sigma\in B(n)}q^{\ell(\sigma)}=\sum_{\sigma\in C(n)}q^{\ell(\sigma)}$ (see Carter \cite[Proposition 10.2.5]{C89}), where $\ell$ denotes the length function on the Coxeter groups.

The plan of the paper is following. First we define a $q$-Fock space, a creation operator $\D^*(x)$ and an annihilation operator $\D(x)$ of type D in Section \ref{sec2}. Next, natural properties of creation and annihilation operators, including norm estimates and the commutation relations, are presented in Section \ref{sec2.2}. The probability distribution of a Brownian motion of type D, $\G(x)=\D(x)+\D^\ast(x), x\in H$,  is studied in the Section \ref{sec3.1}. The associated orthogonal polynomials satisfy the recurrence relation  
\begin{equation}
\begin{split}
&t P_n^{( q)}(t) = P_{n+1}^{( q)}(t) +[n]_q(1 + q^{n-1})P_{n-1}^{( q)}(t), \qquad n=2,3,4,\dots, \\
&P_0^{( q)}(t) =1, P_1^{( q)}(t) =t, P_{2}^{( q)}(t) =t^2-1,  
\end{split}
\end{equation}
which is not known in the literature to the authors' knowledge. This polynomial $P_n^{(q)}(t)$ looks like $ P_n^{(1, q)}(t)$, but they are different since $P_2^{(\alpha,q)}(t)= t^2-(1+\alpha)$ from \eqref{recursion0}. This is because the first Jacobi parameter for $P_n^{(q)}(t)$ is different from that for $P_n^{(1,q)}(t)$. This difference of first Jacobi parameter comes from the fact that the Coxeter group of type D can be realized as a subgroup of the Coxeter group of type B with index 2. 
The main theorem is placed in Section \ref{sec3.4} where we show a Wick formula of type D
\begin{equation}
\langle\Omega, \G(x_1)\cdots \G(x_{2n})\Omega\rangle_{q}=
\displaystyle\sum_{(\pi,f) \in \PD_2(2 n)}q^{\Cr(\pi)+2 \cnp(\pi,f)}\prod_{\substack{\{i<j\} \in \pi \\ f(\{i<j\})=1} }\langle x_i, x_j\rangle \prod_{\substack{\{i<j\} \in \pi\\ f(\{i<j\})=-1}} \langle x_i,\overline{x}_j\rangle. 
\end{equation}
It is described by pair partitions of type D which are introduced and studied in Sections \ref{sec3.2} and \ref{sec3.3}. 
It turns out that our Wick formula of type D generalizes the $t$-transformed classical Wick formula for $t=2$ ($q=1$) as well as the free Wick formula ($q=0$). 
Using this formula we show that the vacuum vector is not tracial with respect to the von Neumann algebra generated by our Brownian motion $\G(x)$, where $x$ runs over a real Hilbert space.  

Our deformed Fock space (both of type B and type D) raises natural interesting questions. Studying the von Neumann algebra generated by the Brownian motions of type D is one direction. In the case of type A, the $q$-deformed von Neumann algebra shares many properties with the free group factor (e.g.\ factoriality \cite{Ric05}), and it is even isomorphic to the free group factor for sufficiently small $q$ \cite{GS14}. The von Neumann algebra of type B contains the so-called $t$-deformed von Neumann algebra, which is well investigated by Wysoczanski \cite{Wys06} and Ricard \cite{Ric06}.  For further information see references in \cite{BEH15}. In our von Neumann algebra of type D, the vacuum state is not a trace (Proposition \ref{NTrace}). The first basic question would be asking if it is cyclic separating. Factoriality of the von Neumann algebra is also a natural question. 

 Another possible future direction is to search for a connection between our pair partitions of type D and noncrossing partitions of type D, the latter of which has been developed in the literature, e.g.\ in \cite{R97,AR04}. Note that our pair partitions of type D are related to Coxeter groups of type D in a natural way (see Remark \ref{TD}). Yet another direction is classical probability. It is known that $q$-Brownian motion of type A has a classical Markov process realization \cite{BKS97}, and its probabilistic properties and extensions have been studied by several authors (see e.g.\ \cite{BW05,BW14}). It is natural to ask if such a realization also exists in the type B and type D cases. 
 
Combining \cite{BS91,BEH15} and the present paper, we have constructed deformations of a full Fock space by the natural three families of finite Coxeter groups, of type A,B and D (type C is isomorphic to type B as already mentioned). We wonder if a similar construction exists for infinite Coxeter groups, e.g.\ affine Coxeter groups. 

The above questions are also listed in the end of this paper as open problems, together with some other problems. 
 

\section{Type D deformation of full Fock space} \label{sec2}

\subsection{Full Fock space}
Let $H$ be a complex Hilbert space with inner product $\langle\cdot,\cdot\rangle$ linear on the right component and anti-linear on the left. Let $\F$ be the algebraic full Fock space over $\H$
\begin{equation}
\F:= \bigoplus_{n=0}^\infty H^{\otimes n}
\end{equation} 
with convention that $H^{\otimes 0}=\C\Omega$ is a one-dimensional normed space along a unit vector $\Omega$. Note that elements of $\F$ are finite linear combinations of the elements from $H^{\otimes n}, n\in \N\cup\{0\}$ and we do not take the completion. 
We equip $\F$ with the inner product  
\begin{align}
&\langle x_1 \otimes \cdots \otimes x_m, y_1 \otimes \cdots \otimes y_n\rangle_{0}= \delta_{m,n}\prod_{i=1}^n \langle x_i, y_i\rangle, \\   
&\langle \Omega, y_1 \otimes \cdots \otimes y_n\rangle_{0}= \langle x_1 \otimes \cdots \otimes x_m, \Omega\rangle_{0} =0, \\
&\langle \Omega, \Omega\rangle_{0}=1, 
\end{align} 
where $m, n \geq1$ and $x_i, y_i \in \H$. 

For $x\in H$ the free left creation and annihilation operators $\la^\ast(x), \la(x)$ on $\F$  are defined by \cite{V85}
\begin{align}
&\la^\ast(x)(x_1 \otimes \cdots \otimes x_n):= x \otimes x_1 \otimes \cdots \otimes x_n , &&n\geq 1\\
&\la^\ast(x)\Omega=x, \\
&\la(x)(x_1 \otimes \cdots \otimes x_n):=\langle x, x_1\rangle\, x_2 \otimes \cdots \otimes x_{n},&&n\geq2, \\ 
&\la(x)x_1:= \langle x,x_1 \rangle\,\Omega, \\ 
&\la(x)\Omega =0. 
\end{align}
It then holds that $\la^\ast(x)^\ast = \la(x)$ and 
$\la^\ast: H \to \mathbb{B}(\F)$ is linear, but  $\la: H \to \mathbb{B}(\F)$ is anti-linear.

\subsection{Coxeter groups of type D}\label{sec2.1}
Let $S(\pm n)$  be the group of all permutations of the $2n$ numbers $\pm1,\cdots, \pm n$. The Coxeter group of type B, denoted by $B(n)$, is the set of permutations $\sigma$ in $S(\pm n)$ such that 
$\sigma(-k)=-\sigma(k), k=1,\dots,n$. The Coxeter group $B(n)$ is a subgroup of $S(\pm n)$ generated by $\{\pi_1,\dots,\pi_{n-1},\overline{\pi}_n\}$, where $\overline{\pi}_n=(n,-n), \pi_i =(i,i+1)(-i, -i-1)$, $i=1,\dots,n-1$. Our strategy
is to define a Coxeter group of type $D(n)$ as a subgroup of the Coxeter group $B(n)$ of index 2. This is well studied in the literature (see \cite{BUR02}).  First we define $D(1)=\{e\}\subset B(1)=\grp\{\overline{\pi}_1\}$. Next we  define, as subgroup of $B(2)$ with index 2,  $D(2)=\grp\{\pi_1,\widehat{\pi}_1\}\subset B(2)$, where $\widehat{\pi}_1=\overline{\pi}_2\pi_1\overline{\pi}_2=(1,-2)$. Note that $D(2)$ is isomorphic to $\Z_2 \oplus \Z_2$. 
We define $D(3)$ by the Coxeter-Dynkin diagram 
\begin{center}
\begin{tikzpicture}
  [scale=.5,auto=left,every node/.style={circle
}]
  \node (n6) at (1,3) {$\pi_1$};
  \node (n4) at (4,5)  {$\widehat{\pi}_2$};
  \node (n3) at (4,1)  {$\pi_2$};
  \foreach \from/\to in {n6/n4,n6/n3}
    \draw (\from) -- (\to);
\end{tikzpicture}
\label{fig:DC2}
\end{center}
where $\widehat{\pi}_2=\overline{\pi}_3\pi_2\overline{\pi}_3$
so that the group $D(3)$ is isomorphic to the permutation group $S(4)$. 
We define $D(n)$ for $n\geq 4$ as a subgroup of $B(n)$ generated by $\pi_1,\dots, \pi_{n-1}, \widehat{\pi}_{n-1}$ where $\widehat{\pi}_{n-1}=\overline{\pi}_{n}\pi_{n-1}\overline{\pi}_{n}$. The Coxeter-Dynkin diagram for $D(n), n\geq4$ is described in Fig.\ \ref{fig:DCn}, 
 which says that the generators satisfy the generalized braid relations 
  $\pi_i^2=\widehat{\pi}_{n-1}^2=e, 1\leq i \leq n-1$,  $(\pi_{n-2}\widehat{\pi}_{n-1})^3=(\pi_i \pi_{i+1})^3=e, 1\leq i < n-1$ and $(\pi_i \pi_j)^2=(\pi_k \widehat{\pi}_{n-1})^2=e$ if $1\leq i,j,k\leq n-1,|i-j|\geq2,  k \neq n-2$.  Note that $\{\pi_i\mid i=1,\dots,n-1\}$ generates the symmetric group $S(n)$.

\begin{figure}[h]
\begin{center}
\begin{tikzpicture} 
  [scale=.5,auto=left,every node/.style={circle}]
  \node (n6) at (1,3) {$\pi_{n-2}$};
  \node (n4) at (4,5)  {$\widehat{\pi}_{n-1}$};
  \node (n5) at (-3,3)  {$\dots$};
\node (n1) at (-7,3)  {$\pi_{2}$};
\node (n2) at (-11,3)  {$\pi_{1}$};
  \node (n3) at (4,1)  {$\pi_{n-1}$};

  \foreach \from/\to in {n6/n4,n6/n3,n1/n2,n1/n5,n5/n6}
    \draw (\from) -- (\to);
\end{tikzpicture}
\caption{Coxeter-Dynkin diagram for $D(n).$}
\label{fig:DCn}
\end{center}
\end{figure}
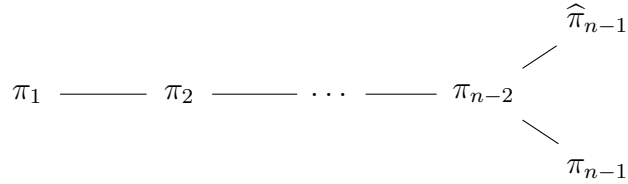

\subsection{Fock space of type D}
We deform the full Fock space $\F$ and creation $l^\ast(x)$ and annihilation operators $l(x)$ on it. 

A selfadjoint involution on $H$ is a selfadjoint linear bounded operator on $H$ such that the double application of it becomes the identity operator (see Example \ref{inv} below). 
Suppose that $x\mapsto \overline{x}, x\in H$ is a selfadjoint involution. We then define an action of $D(n)$ on $H^{\otimes n}$ by 
\begin{align}
&\pi_i(x_1\otimes\cdots \otimes x_n) = x_1 \otimes \cdots \otimes x_{i-1} \otimes x_{i+1} \otimes x_{i} \otimes x_{i+2}\otimes \cdots \otimes x_{n},& n \geq2,  \label{Coxeter1} \\
&\widehat{\pi}_{n-1}(x_1\otimes\cdots \otimes x_n)= x_1\otimes x_2\otimes\cdots \otimes x_{n-2} \otimes \overline{x}_{n}\otimes \overline{x}_{n-1},& n \geq 2. 
\label{Coxeter2}
\end{align}
\begin{remark}
The action of $\widehat{\pi}_{n-1}$ is defined in the above way since $\widehat{\pi}_{n-1} = \overline{\pi}_n \pi_{n-1} \overline{\pi}_n$ and the action of (the left version of) $\overline\pi_n$ defined in \cite{BEH15} is 
\be
\overline{\pi}_n(x_1\otimes\cdots \otimes x_n)= x_1\otimes x_2\otimes\cdots \otimes x_{n-1} \otimes \overline{x}_{n}, \qquad n \geq 1. 
\ee
A second way to define a subgroup of $B(2)$ with index 2 is $\grp\{\overline{\pi}_2\pi_1\overline{\pi}_2\pi_1\}$, but in this situation we have $\overline{\pi}_2\pi_1\overline{\pi}_2\pi_1(x_1\otimes  x_2)=\overline{x}_{1}\otimes \overline{x}_{2}$, which is not compatible with $D(n)$ for $n\geq 3.$ 
\end{remark}
\begin{example} \label{inv}
\begin{enumerate}[\rm(1)] 
\item The identity involution $x\mapsto x$ on $H$. Then the actions of $\widehat{\pi}_{n-1}$ and $\pi_{n-1}$ coincide. 
\item If $H$ is spanned by an orthonormal basis $\{e_i\}_{i \in \{\pm 1,\dots, \pm n\}}$ (or $\{e_i\}_{i \in \mathbb{Z}\setminus\{0\}}$ in the infinite-dimensional case), then the map
$$
\overline{e}_{i}=e_{-i}, \qquad i \in \{\pm1,\dots,\pm n\}
$$
extends to an involution on $H$. 
\end{enumerate}
\end{example}

 Let $\ell$ be the length function on the Coxeter groups: $\ell(\sigma)$ is the minimal number $k$ such that $\sigma$ can be written as the product of $k$ generators, allowing multiple use of each generator.  
For $q \in[-1,1]$ we define the {\it symmetrizer of type D} on $H^{\otimes n}$, 
 \begin{align}
&\widehat{P}_{q}^{(n)}= \sum_{\sigma \in D(n)}  q^{\ell(\sigma)}\, \sigma,\qquad n \geq1, \\ 
&\widehat{P}_{q}^{(0)}= \id_{H^{\otimes 0}} 
\end{align}
with convention $0^0=1$. Our operator $\widehat{P}_{q}^{(n)}$ is a special case of the operator defined in \cite{BSz03}. Note that we have $\widehat{P}_{0}^{(n)}=\id_{\H^{\otimes n}}$. 
Moreover let 
$$
\widehat{P}_{q}=\bigoplus_{n=0}^\infty \widehat{P}_{q}^{(n)}
$$ 
be the symmetrizer of type D on the algebraic full Fock space $\F$. 
From Bo\.zejko and Speicher \cite[Theorem 2.2]{BS94}, the operator $\widehat{P}_{q}^{(n)}$ and hence $\widehat{P}_{q}$ is positive for $-1 \leq q \leq 1$. If $ |q|<1$ then $\widehat{P}_{q}^{(n)}$ is a strictly positive operator meaning that it is positive and $\text{Ker}(\widehat{P}_{q}^{(n)})=\{0\}$. 

The symmetrizer of type D allows us to define the deformation
\begin{equation}
\langle f, g\rangle_{q}:=\langle f, \widehat{P}_{q} g\rangle_{0}, \qquad f,g \in \F,  
\end{equation}
which is a semi-inner product from the positivity of $\widehat{P}_{q}$ for $q\in[-1,1]$. We restrict the parameters to the case $q \in(-1,1)$ so that the deformed semi-inner product becomes an inner product. 

\begin{definition} For $q\in(-1,1)$, the algebraic full Fock space $\F$ equipped with the inner product $\langle\cdot,\cdot \rangle_{q}$ is called the \emph{($q$-) Fock space of type D} and is denoted by $\Fq$. Let $\D^\ast(x):= \la^\ast(x)$ and $\D(x)$ be its adjoint in $\Fq$. The operators $\D^\ast(x)$ and $\D(x)$ are called \emph{($q$-) creation and annihilation operators of type D}, respectively.  
\end{definition}
More precisely, one can show that $\D^\ast(x)$ is a bounded operator from $H^{\otimes n}$ to $H^{\otimes (n+1)}$ for each $n \geq0$, and so 
$\D(x)\colon H^{\otimes (n+1)}\to H^{\otimes n}$ is defined to be its adjoint. They can then be extended to linear operators on $\Fq$ by direct sum. We see in Proposition \ref{norm}  that they are in fact bounded operators for $q\in(-1,1)$ on $\Fq$. 
One can check that $\D^\ast\colon H \to \mathbb{B}(\Fq)$ is linear and $\D\colon H \to \mathbb{B}(\Fq)$ is anti-linear, similarly to the free case $l^\ast, l$ on $\F$. 

Since $\widehat{P}_{0}$ is identity, our $q$-Fock space of type D is the full Fock space when $q=0$, and  $d_0^\ast(x)=l^\ast(x)$ and $d_0(x)=l(x)$.  

\begin{remark} 
In the type B case \cite{BEH15} we defined the symmetrizer with two parameters $\alpha,q \in[-1,1]$ by 
\be
P_{\alpha,q}^{(n)} = \sum_{\sigma \in B(n)} \alpha^{\ell_1(\sigma)} q^{\ell_2(\sigma)} \sigma, 
\ee 
where $\ell_1(\sigma)$ is the number of $\overline{\pi}_n$ that appear in an irreducible form of $\sigma$ and $\ell_2(\sigma) = \ell(\sigma)-\ell_1(\sigma)$. 
However, in the type D case the number of $\widehat{\pi}_{n-1}$ in an irreducible form of $\sigma \in D(n)$ is not well defined since $\pi_{n-2} \widehat{\pi}_{n-1} \pi_{n-2} = \widehat{\pi}_{n-1} \pi_{n-2} \widehat{\pi}_{n-1}$. Therefore, we introduce only one parameter $q$ in the type D case. 
\end{remark}

\section{Creation and annihilation operators of type D}\label{sec2.2}

\subsection{Recursive formula for symmetrizer of type D}
A natural embedding of $D(n-1)=\grp\langle \pi_1, \dots, \pi_{n-2},\widehat{\pi}_{n-2}\rangle$ into 
$D(n)=\grp\langle \pi_1, \dots, \pi_{n-1},\widehat{\pi}_{n-1}\rangle$ is defined by 
\begin{align}
&\pi_k \mapsto \pi_{k+1}, \qquad k=1,2,\dots, n-2, \\
& \widehat{\pi}_{n-2} \mapsto \widehat{\pi}_{n-1}. 
\end{align} 
Corresponding to the quotient $D(n-1) \backslash D(n)$, we  decompose the operator $\widehat{P}^{(n)}_{q}$. This decomposition is important throughout the paper. 
\begin{proposition}\label{prop1}
We have the decomposition 
\begin{equation}\label{decomposition}
\begin{split}
&\widehat{P}^{(n)}_{q}= ( I\otimes \widehat{P}^{(n-1)}_{q}) \widehat{R}^{(n)}_{q}=  (\widehat{R}^{(n)}_{q})^* ( I\otimes \widehat{P}^{(n-1)}_{q}), \qquad n\geq 2, \\
&\widehat{P}^{(1)}_{q} = \widehat{R}^{(1)}_{q},  
\end{split}
\end{equation}
where $\widehat{R}^{(n)}_{q}$ is a bounded linear operator on $\H^{\otimes n}$ defined by 
\begin{equation}
\widehat{R}^{(n)}_{q}= 
\begin{cases}\displaystyle  \id+\sum_{k=1}^{n-1}q^{k}\pi_{1}\cdots \pi_{k}   \\
\quad \displaystyle+ q^{n-1} \pi_1 \cdots \pi_{n-2} \widehat{\pi}_{n-1} \left(\id+\sum_{k=1}^{n-1}q^{k}\pi_{n-1}\cdots \pi_{n-k}\right), & n \geq3, \\
 \id + q \pi_1 + q \widehat{\pi}_1(\id+q  \pi_1), & n=2, \\
 \id, & n=1.  
\end{cases}
\end{equation}
\end{proposition}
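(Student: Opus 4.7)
The plan is to use the standard parabolic coset decomposition of a finite Coxeter group. For $n\ge 3$, the embedding $\pi_k\mapsto\pi_{k+1}$, $\widehat{\pi}_{n-2}\mapsto\widehat{\pi}_{n-1}$ realizes $D(n-1)$ as the standard parabolic subgroup of $D(n)$ generated by $J:=\{\pi_2,\dots,\pi_{n-1},\widehat{\pi}_{n-1}\}$, i.e.\ the one obtained by deleting the node $\pi_1$ from the Coxeter--Dynkin diagram in Fig.~\ref{fig:DCn}. By the classical theorem on parabolic cosets, every $\sigma\in D(n)$ admits a unique factorization $\sigma=\tau c$ with $\tau\in D(n-1)$ and $c$ the minimum-length representative of the right coset $D(n-1)\sigma$, and this factorization is length-additive: $\ell(\sigma)=\ell(\tau)+\ell(c)$. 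A counting check gives $|D(n)/D(n-1)|=2n$.

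The key step is to identify those $2n$ minimum-length representatives with the $2n$ group elements appearing in $\widehat{R}^{(n)}_{q}$, namely
\[
e,\ \pi_1,\ \pi_1\pi_2,\ \dots,\ \pi_1\cdots\pi_{n-1},\ \pi_1\cdots\pi_{n-2}\widehat{\pi}_{n-1},\ \pi_1\cdots\pi_{n-2}\widehat{\pi}_{n-1}\pi_{n-1},\ \dots,\ \pi_1\cdots\pi_{n-2}\widehat{\pi}_{n-1}\pi_{n-1}\cdots\pi_1,
\]
with claimed lengths $0,1,\dots,n-1,n-1,n,\dots,2n-2$. To show these $2n$ elements lie in distinct cosets, I use the signed-permutation realization of $D(n)$ on $\{\pm 1,\dots,\pm n\}$: the embedded $D(n-1)$ is exactly the stabilizer of the letter $1$, so right cosets are classified by $\sigma^{-1}(1)\in\{\pm 1,\dots,\pm n\}$. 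A direct computation tracking where position~$1$ ends up shows that $c^{-1}(1)$ runs through $1,2,\dots,n,-n,-(n-1),\dots,-1$ as $c$ traverses the list. Certifying that each $c$ is of minimum length in its coset, and simultaneously that the displayed word for $c$ is reduced, amounts to verifying $\ell(sc)>\ell(c)$ for every $s\in J$, by the usual root-system argument for Coxeter groups of type D.

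Once the identification is in place, writing $\widehat{P}^{(n)}_{q}=\sum_{\sigma}q^{\ell(\sigma)}\sigma$ as a double sum over $\tau$ and $c$ yields
\[
\widehat{P}^{(n)}_{q}=\Bigl(\sum_{\tau\in D(n-1)}q^{\ell(\tau)}\tau\Bigr)\widehat{R}^{(n)}_{q}=(\id\otimes\widehat{P}^{(n-1)}_{q})\,\widehat{R}^{(n)}_{q},
\]
the last equality because the embedded $D(n-1)$ acts trivially on the first tensor slot of $H^{\otimes n}$. The second identity in \eqref{decomposition} follows by taking the adjoint, using that both $\widehat{P}^{(n)}_{q}$ and $\id\otimes\widehat{P}^{(n-1)}_{q}$ are self-adjoint (since $\ell(\sigma)=\ell(\sigma^{-1})$ and each group element acts unitarily on $H^{\otimes n}$). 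The small cases $n=1$ (trivial) and $n=2$ (where $D(1)=\{e\}$ and $\pi_1,\widehat{\pi}_1$ commute) follow by direct computation.

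The main obstacle is rigorously certifying that each word $\pi_1\cdots\pi_{n-2}\widehat{\pi}_{n-1}\pi_{n-1}\cdots\pi_{n-k}$ is a reduced expression, so that the $q$-exponents in $\widehat{R}^{(n)}_{q}$ match the true Coxeter length. Everything else reduces to standard parabolic-coset machinery for finite Coxeter groups.
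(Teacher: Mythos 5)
Your proposal follows essentially the same route as the paper: the length-additive decomposition of $D(n)$ over the embedded parabolic subgroup $D(n-1)$, with $\widehat{R}^{(n)}_{q}$ realized as the sum of $q^{\ell(w)}w$ over the $2n$ minimal-length coset representatives $w$, and the second identity in \eqref{decomposition} obtained from selfadjointness of $\widehat{P}^{(n)}_{q}$. The one step you flag as the remaining obstacle --- certifying that the listed words are reduced and are precisely the minimal-length representatives --- is exactly what the paper disposes of by citing Stumbo's explicit description of minimal coset representatives for parabolic quotients, together with Humphreys for existence, uniqueness and length-additivity, so your argument is correct and complete once that citation (or the root-system verification you sketch) is supplied.
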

\begin{proof} Let $n\geq2$. 
It is known that there exist unique left coset representatives $\{w(k)\mid 0 \leq k \leq 2n-1\}$ for $D(n-1)\backslash D(n)$ with minimal lengths \cite[p.\ 19]{Hum90}. 
Due to Stumbo \cite[Theorem 4]{S00}, these coset representatives are given by 
\be
w(k) = \left\{ \begin{array}{ll}
\pi_{1} \cdots \pi_{k} & \textrm{if $0\leq k\leq n-1$}, \\
\pi_{1}  \cdots  \pi_{n-2} \widehat{\pi}_{n-1}& \textrm{if $k=n$}, \\
\pi_{1}  \cdots  \pi_{n-2} \widehat{\pi}_{n-1}\pi_{n-1}\dots  \pi_{2n-k}& \textrm{if $ n+1\leq  k\leq  2n-1 $}. 
\end{array} \right.
\ee
Therefore every $\sigma \in D(n)$ decomposes into $\sigma=\sigma' w(k)$ for some unique $\sigma' \in D(n-1)$ and unique $k$, and in this case it is known that $\ell(\sigma)=\ell(\sigma') +\ell(w(k))$ (see \cite[p.\ 19]{Hum90}). Since $\widehat{R}_q^{(n)}$ is written in the form 
\be
\widehat{R}^{(n)}_{q} = \sum_{k=0}^{2n-1} q^{\ell(w(k))} w(k), 
\ee
we have the first identity \eqref{decomposition}. The second identity follows by the selfadjointness of $\widehat{P}_q^{(n)}$.  
\end{proof}

The operator $\widehat{R}_{q}^{(n)}$ plays a central role in this paper. Firstly we compute the annihilation operator in terms of $\widehat{R}_{q}^{(n)}$. For this purpose we need a commutation relation between $l^\ast(x)$ and $\pi_k$.

\begin{lemma} \label{lemm:relacjalpi}
For $x\in H$ and $n\geq2$ we have the following relations on $H^{\otimes n}$:  
\begin{align*}
&\la^\ast(x)\pi_k=\pi_{k+1}\la^\ast(x), \qquad 1 \leq k \leq n-1, \\
&\la^\ast(x)\widehat{\pi}_{n-1}=\widehat{\pi}_n\la^\ast(x). 
\end{align*}
This implies  $\la^\ast(x) \widehat{P}^{(n)}_{q} =( I\otimes \widehat{P}^{(n)}_{q}) \la^\ast(x)$ for $n\geq1$. 
\end{lemma}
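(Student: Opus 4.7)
The plan is to verify the two generator-level commutation identities by a direct computation on elementary tensors, and then lift them to the symmetrizer $\widehat{P}^{(n)}_q$ by writing each $\sigma\in D(n)$ as a word in the generators and pulling $\la^\ast(x)$ across one generator at a time.

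For the first identity, applying $\la^\ast(x)\pi_k$ to $x_1\otimes\cdots\otimes x_n$ produces
$$x\otimes x_1\otimes\cdots\otimes x_{k-1}\otimes x_{k+1}\otimes x_k\otimes x_{k+2}\otimes\cdots\otimes x_n.$$
In the reverse order, $\la^\ast(x)$ first produces the length-$(n+1)$ tensor $x\otimes x_1\otimes\cdots\otimes x_n$, in which the original $x_j$ now sits at position $j+1$; the transposition $\pi_{k+1}$ then swaps positions $k+1$ and $k+2$, namely the factors $x_k$ and $x_{k+1}$, yielding the same result. The second identity is analogous: on the length-$(n+1)$ tensor $x\otimes x_1\otimes\cdots\otimes x_n$, the involutive swap $\widehat{\pi}_n$ acts on positions $n$ and $n+1$ (occupied by $x_{n-1}$ and $x_n$) and replaces them by $\overline{x}_n\otimes\overline{x}_{n-1}$, matching $\la^\ast(x)\widehat{\pi}_{n-1}$.

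Next, I would iterate these identities to handle an arbitrary $\sigma\in D(n)$. Writing $\sigma=s_{i_1}\cdots s_{i_r}$ as any word in the generators $\{\pi_1,\dots,\pi_{n-1},\widehat{\pi}_{n-1}\}$, repeated application of the two relations yields
$$\la^\ast(x)\,\sigma=\widetilde{\sigma}\,\la^\ast(x),$$
where $\widetilde{\sigma}$ is the corresponding product of shifted generators $\pi_{k+1}$ and $\widehat{\pi}_n$ acting on $H^{\otimes(n+1)}$. The key observation is that each shifted generator fixes the first tensor factor, since $\pi_{k+1}$ touches only positions $k+1,k+2\geq 2$ and $\widehat{\pi}_n$ only positions $n,n+1\geq 2$; therefore $\widetilde{\sigma}=I\otimes\sigma$ as operators on $H\otimes H^{\otimes n}$. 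This identification also shows that the right-hand side is independent of the chosen expression for $\sigma$. Summing $q^{\ell(\sigma)}\la^\ast(x)\sigma=q^{\ell(\sigma)}(I\otimes\sigma)\la^\ast(x)$ over $\sigma\in D(n)$ then gives
$$\la^\ast(x)\widehat{P}^{(n)}_q=(I\otimes\widehat{P}^{(n)}_q)\la^\ast(x),$$
and the case $n=1$ is trivial since $D(1)=\{e\}$ and $\widehat{P}^{(1)}_q=I$.

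The argument is essentially a bookkeeping exercise, so I do not expect a serious obstacle. The only point that deserves a moment of care is the word-independence of $\widetilde{\sigma}$ just mentioned, but it follows immediately from the identification $\widetilde{\sigma}=I\otimes\sigma$ and the fact that $\sigma$ itself is a well-defined element of $D(n)$.
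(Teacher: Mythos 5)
Your proof is correct and follows essentially the same route as the paper: a direct check of the two generator relations on elementary tensors, followed by commuting $\la^\ast(x)$ across a word in the generators to obtain $\la^\ast(x)\widehat{P}^{(n)}_q=(I\otimes\widehat{P}^{(n)}_q)\la^\ast(x)$. The paper leaves that last lifting step implicit, whereas you spell it out (including the word-independence point), which is a harmless elaboration rather than a different argument.
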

\begin{proof}
Let $n \geq2$ and $f=x_1 \otimes  \cdots \otimes x_{n} \in H^{\otimes n}$. For $1\leq k<n $ we have 
\begin{align*}\la^\ast(x)\pi_k f=x\otimes x_1 \otimes \cdots  \otimes x_{k+1} \otimes x_{k} \otimes \cdots \otimes x_{n}=\pi_{k+1}\la^\ast(x)f
\end{align*}
and 
\begin{align*}\la^\ast(x)\widehat{\pi}_{n-1} f=x\otimes x_1 \otimes \cdots  \otimes \overline{x}_{n} \otimes \overline{x}_{n-1}=\widehat{\pi}_{n}\la^\ast(x)f. 
\end{align*}
\end{proof}

\subsection{Formula for annihilation operator}

Now we are ready to compute the annihilation operator in terms of $\widehat{R}_{q}^{(n)}$.

\begin{proposition}\label{prop2} For $n \geq1$, we have 
\begin{equation}
\D(x)=\la(x) {\widehat{R}^{(n)}}_{q} \text{~on $H^{\otimes n}$}. 
\end{equation}
\end{proposition}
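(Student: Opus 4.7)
The plan is to use the definition of $\D(x)$ as the $\langle\cdot,\cdot\rangle_q$-adjoint of $\la^{\ast}(x)$ and to convert the identity into a statement about the undeformed inner product $\langle\cdot,\cdot\rangle_0$, where the decomposition in Proposition \ref{prop1} and the commutation relation in Lemma \ref{lemm:relacjalpi} can be combined. First I would dispose of $n=1$ separately: since $\widehat{P}_q^{(1)}=\id=\widehat{R}_q^{(1)}$, the $q$-inner product coincides with the undeformed one on $H$, so $\D(x)$ and $\la(x)$ have the same matrix coefficients and hence agree.

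For $n\geq 2$, fix $f\in H^{\otimes n}$ and $g\in H^{\otimes (n-1)}$. Starting from
\[
\langle \D(x)f,g\rangle_q=\langle f,\la^{\ast}(x)g\rangle_q=\langle f,\widehat{P}_q^{(n)}\la^{\ast}(x)g\rangle_0,
\]
I would apply the \emph{second} formula from Proposition \ref{prop1}, namely $\widehat{P}_q^{(n)}=(\widehat{R}_q^{(n)})^{\ast}(\id\otimes\widehat{P}_q^{(n-1)})$, where $\ast$ denotes the $\langle\cdot,\cdot\rangle_0$-adjoint. Then I would use Lemma \ref{lemm:relacjalpi} to commute $\id\otimes\widehat{P}_q^{(n-1)}$ past $\la^{\ast}(x)$, producing
\[
\langle \D(x)f,g\rangle_q=\langle f,(\widehat{R}_q^{(n)})^{\ast}\la^{\ast}(x)\widehat{P}_q^{(n-1)}g\rangle_0=\langle \la(x)\widehat{R}_q^{(n)}f,\widehat{P}_q^{(n-1)}g\rangle_0=\langle \la(x)\widehat{R}_q^{(n)}f,g\rangle_q.
\]
Since $g$ ranges over all of $H^{\otimes (n-1)}$ and $\langle\cdot,\cdot\rangle_q$ is a (nondegenerate) inner product on $\Fq$ for $q\in(-1,1)$, the desired identity $\D(x)f=\la(x)\widehat{R}_q^{(n)}f$ follows.

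The argument is essentially a rearrangement; the substantive ingredients are Proposition \ref{prop1} and Lemma \ref{lemm:relacjalpi}. The one conceptual subtlety, and thus the main thing to get right, is to choose the correct form of the symmetrizer decomposition: one must use $\widehat{P}_q^{(n)}=(\widehat{R}_q^{(n)})^{\ast}(\id\otimes\widehat{P}_q^{(n-1)})$ rather than the other factorization, so that $\la^{\ast}(x)$ sits immediately to the right of $\id\otimes\widehat{P}_q^{(n-1)}$ and can be pushed through by Lemma \ref{lemm:relacjalpi}, leaving $(\widehat{R}_q^{(n)})^{\ast}$ free to become $\widehat{R}_q^{(n)}$ paired with $\la(x)$ after taking the $\langle\cdot,\cdot\rangle_0$-adjoint.
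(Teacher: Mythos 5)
Your proposal is correct and follows essentially the same route as the paper: both reduce the claim to the undeformed inner product, apply the factorization of $\widehat{P}_q^{(n)}$ from Proposition \ref{prop1}, and push $I\otimes\widehat{P}_q^{(n-1)}$ past $\la^\ast(x)$ via Lemma \ref{lemm:relacjalpi}. The only difference is cosmetic: the paper computes $\langle f,\D(x)g\rangle_q$ with the first factorization $(I\otimes\widehat{P}_q^{(n-1)})\widehat{R}_q^{(n)}$ and moves factors across by selfadjointness, whereas you compute $\langle\D(x)f,g\rangle_q$ with the adjoint factorization, which amounts to the same computation.
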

\begin{proof}
Let $f \in H^{\otimes (n-1)},g \in H^{\otimes n}$. Recalling that $\la^\ast(x) \widehat{P}^{(n-1)}_{q} =( I\otimes \widehat{P}^{(n-1)}_{q}) \la^\ast(x)$ (see Lemma \ref{lemm:relacjalpi}) we get  
\begin{equation}
\begin{split}
\langle f, \D(x)g \rangle_{q} 
&=\langle \D^\ast(x)f, g \rangle_{q} =\langle \la^\ast(x) f, g \rangle_{q}=\langle \widehat{P}^{(n)}_{q} \la^\ast(x) f,  g \rangle_{0}  \\
&= \langle \la^\ast(x) f,    ( I\otimes \widehat{P}^{(n-1)}_{q}) \widehat{R}^{(n)}_{q} g \rangle_{0} =\langle( I\otimes \widehat{P}^{(n-1)}_{q}) \la^\ast(x) f,   \widehat{R}^{(n)}_{q} g \rangle_{0} \\
&= \langle\la^\ast(x) \widehat{P}^{(n-1)}_{q} f,  \widehat{R}^{(n)}_{q} g \rangle_{0}=\langle \widehat{P}^{(n-1)}_{q} f,   \la(x)\widehat{R}^{(n)}_{q} g \rangle_{0}\\
&= \langle f , \la(x)\widehat{R}^{(n)}_{q}g \rangle_{q}, 
\end{split}
\end{equation}
the conclusion. 
\end{proof}

We compute the annihilator $\D(x)$ more explicitly. 
Let $N$ be the number operator on $\Fq$ defined by
\be 
N(f)=n f, \qquad f\in H^{\otimes n}, n\in \N\cup\{0\}
\ee
 and let $J$ be the operator $0 \oplus \bigoplus_{n=1}^\infty \overline\pi_n$ on $\Fq$, that is, 
\begin{align}
J(x_1\otimes \cdots \otimes x_n) &=x_1\otimes \cdots \otimes x_{n-1} \otimes \overline{x}_n, \qquad n\geq1, \\
 J(\Omega)&=0. 
\end{align}
The operator $J$ is in fact a \emph{selfadjoint} involution on $\Fq$, which is proved in Corollary \ref{J}.

Let $\la_q$ and $\ra_q$ be left and right $q$-derivatives respectively: for $x, x_1,\dots, x_n \in \H, n \geq 1$, 
\begin{align}
&\la_q(x)(x_1\otimes \cdots \otimes x_n)= \sum_{k=1}^n q^{k-1} \langle x, x_k \rangle\, x_1\otimes \cdots \otimes \check{x}_{k} \otimes \cdots \otimes x_n, \label{rq}\\
&\ra_q(x)(x_1\otimes \cdots \otimes x_n)  = \sum_{k=1}^n  q^{k-1}  \langle x,x_{n-k+1}\rangle\, x_1\otimes\dots\otimes \check{x}_{n-k+1}\otimes \cdots \otimes  x_n, 
\label{lq1} \\
& \la_q(x)(\Omega)= \ra_q(x)(\Omega)=0,  
\end{align}
where $\check{x}_k$ means that $x_k$ is removed from the tensor, e.g.\ $x \otimes \check{y} \otimes z = x \otimes z$.

\begin{theorem}\label{thm1} For $x\in \H$ we have 
\be\label{eq:annihilator}
\D(x)= \la_q(x)+ q^N J \ra_{q}(\overline{x})= \la_q(x)+ J \ra_{q}(\overline{x})q^{N-1}.  
\ee
Note that $d_0(x)=\la_0(x)=\la(x)$ is the free left annihilation operator. 
\end{theorem}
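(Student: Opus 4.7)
The plan is to invoke Proposition~\ref{prop2}, which gives $\D(x) = \la(x)\widehat{R}^{(n)}_q$ on $H^{\otimes n}$, and to compute $\la(x)\widehat{R}^{(n)}_q$ termwise using the explicit formula for $\widehat{R}^{(n)}_q$ from Proposition~\ref{prop1}. For $n \geq 3$ the formula splits naturally into two blocks, which I will show produce the two summands in \eqref{eq:annihilator}: the symmetric-group block $\id + \sum_{k=1}^{n-1} q^k \pi_1 \cdots \pi_k$ will yield $\la_q(x)$, while the type-D twist block $q^{n-1}\pi_1 \cdots \pi_{n-2}\widehat{\pi}_{n-1}\bigl(\id + \sum_{k=1}^{n-1} q^k \pi_{n-1} \cdots \pi_{n-k}\bigr)$ will yield $q^N J \ra_q(\overline{x})$. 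The cases $n=1,2$ are handled by direct inspection of $\widehat{R}^{(1)}_q$ and $\widehat{R}^{(2)}_q$.

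The first block is straightforward: $\pi_1 \cdots \pi_k$ cycles $x_{k+1}$ into position~$1$ of $x_1 \otimes \cdots \otimes x_n$, so applying $\la(x)$ yields $\langle x, x_{k+1}\rangle x_1 \otimes \cdots \otimes \check{x}_{k+1} \otimes \cdots \otimes x_n$; weighting by $q^k$ and summing for $k = 0, 1, \dots, n-1$ and reindexing $j = k+1$ recovers the definition \eqref{rq} of $\la_q(x)$. For the twist block I will compute $\pi_1 \cdots \pi_{n-2}\widehat{\pi}_{n-1}\pi_{n-1} \cdots \pi_{n-k}(x_1 \otimes \cdots \otimes x_n)$ in three steps: first $\pi_{n-1} \cdots \pi_{n-k}$ shifts $x_{n-k}$ into position~$n$; then $\widehat{\pi}_{n-1}$ swaps the last two factors and involutes them, placing $\overline{x}_{n-k}$ at position $n-1$ and $\overline{x}_n$ at position $n$; finally $\pi_1 \cdots \pi_{n-2}$ cycles the $(n-1)$st factor into position~$1$. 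Hitting the result with $\la(x)$ contracts against $\overline{x}_{n-k}$, and selfadjointness of the involution lets me rewrite $\langle x, \overline{x}_{n-k}\rangle = \langle \overline{x}, x_{n-k}\rangle$. The ``$\id$'' inside the parenthesis plays the role $k=0$ (i.e.\ $j=n$); setting $j := n-k$ in the remaining sum matches, term by term, the definition \eqref{lq1} of $\ra_q(\overline{x})$ postcomposed with $J$ and rescaled by $q^{n-1}$.

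Since the twist block outputs elements of $H^{\otimes (n-1)}$, the scalar $q^{n-1}$ coincides with $q^N$ acting on its image, giving the first equality in \eqref{eq:annihilator}; the second equality is immediate because $q^{N-1}$ acts on $H^{\otimes n}$ as the scalar $q^{n-1}$, which trivially commutes with $J\ra_q(\overline{x})$. The main bookkeeping obstacle is tracking the composite permutation $\pi_1 \cdots \pi_{n-2}\widehat{\pi}_{n-1}\pi_{n-1} \cdots \pi_{n-k}$ inside the twist block; however each of its three constituent cycles acts very simply on a tensor word, so the verification reduces to routine combinatorics once the correct intermediate tensors are written down. The closing remark $d_0(x)=\la(x)$ is then immediate from $\widehat{R}_0^{(n)} = \id$.
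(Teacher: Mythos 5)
Your proposal is correct and follows essentially the same route as the paper: apply Proposition \ref{prop2}, split $\widehat{R}^{(n)}_q$ into the symmetric-group block and the twist block, identify these with $\la_q(x)$ and $q^{n-1}J\ra_q(\overline{x})$ respectively (using selfadjointness of the involution), and check small $n$ directly. The only difference is that you spell out the permutation bookkeeping that the paper compresses into ``after some computations,'' and your tracking of the composite $\pi_1\cdots\pi_{n-2}\widehat{\pi}_{n-1}\pi_{n-1}\cdots\pi_{n-k}$ is accurate.
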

\begin{proof} 
Let $n\geq2$. From Propositions \ref{prop1} and \ref{prop2} we have 
\begin{equation}
\begin{split}
\D(x)(x_1\otimes \cdots \otimes x_n)
&= \la(x)\widehat{R}_{q}^{(n)}(x_1\otimes \cdots  \otimes x_n) = L+R,  
\end{split}
\end{equation}
where 
\begin{align}
&L = \la(x)\left(I+\sum_{k=1}^{n-1}q^{k}\pi_{1}\cdots \pi_{k}\right)(x_1\otimes \cdots  \otimes x_n), \\
&R=  q^{n-1}  \la(x) \pi_1 \cdots \pi_{n-2} \widehat{\pi}_{n-1}\left(I+\sum_{k=1}^{n-1}q^{k}\pi_{n-1}\cdots \pi_{n-k}\right)(x_1\otimes \cdots  \otimes x_n).
\end{align}
After some computations, we get 
\begin{align}
&L=\la_q(x)(x_1\otimes \cdots  \otimes x_n), \\
&R= q^{n-1} \sum_{k=1}^n  q^{k-1}  \langle x,\overline{x}_{n-k+1}\rangle\, J(x_1\otimes\dots\otimes \check{x}_{n-k+1}\otimes \cdots \otimes  x_n).
\label{lq}
\end{align}
By the selfadjointness of the involution we have $\langle x,\overline{x}_{n-k+1}\rangle=\langle \overline{x},x_{n-k+1}\rangle$, and hence we get the conclusion \eqref{eq:annihilator} on $\H^{\otimes n}$  for $n\geq2$. 
For $n=0,1$ we can directly check the formula. 
\end{proof}

\subsection{Commutation relations and norm estimate}   

The commutation relations on the one particle space $H^{\otimes 1}$ and on the other spaces $H^{\otimes n}, n\neq1$ look different.  
\begin{proposition}\label{commutation}
For $x,y \in H$ we have the commutation relation 
\begin{align}
\D(x)\D^\ast(y)- q \D^\ast(y)\D(x)&= \langle  x,y \rangle \id +  \langle x, \overline{y} \rangle\, q^{2 n}J \textrm{~~ on~ } \H^{\otimes n}, n=0,2,3,4,\dots  \label{comutacja}
\end{align}
\end{proposition}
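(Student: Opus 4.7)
The strategy is to substitute the explicit formula $\D(x)=\la_q(x)+J\ra_q(\overline{x})q^{N-1}$ from Theorem~\ref{thm1} into both $\D(x)\D^\ast(y)$ and $\D^\ast(y)\D(x)$, and reduce the identity to three elementary commutation lemmas together with a direct check at $\Omega$.

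The three ingredients I would establish first are as follows. \emph{(i)} The classical type A relation $\la_q(x)\la^\ast(y)-q\la^\ast(y)\la_q(x)=\langle x,y\rangle\id$, which follows by inspection on a simple tensor. \emph{(ii)} The one-sided analogue for the right $q$-derivative, $\ra_q(\overline{x})\la^\ast(y)f=\la^\ast(y)\ra_q(\overline{x})f+q^n\langle\overline{x},y\rangle f$ for $f\in H^{\otimes n}$; the defect term is precisely the rightmost contribution in $\ra_q(\overline{x})(y\otimes x_1\otimes\cdots\otimes x_n)$ (position $n+1$, weight $q^n$), which hits the newly prepended $y$, while the remaining $n$ summands reassemble as $\la^\ast(y)\ra_q(\overline{x})f$. \emph{(iii)} The partial commutation $J\la^\ast(y)=\la^\ast(y)J$ on $H^{\otimes m}$ for every $m\geq 1$, which holds because both sides prepend $y$ and bar the original last factor; it fails on $\Omega$ since $J\Omega=0$ while $J\la^\ast(y)\Omega=\overline{y}$.

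With these lemmas the computation is mechanical. Fix $f\in H^{\otimes n}$ with $n\geq 2$. The factor $q^{N-1}$ acting on $\la^\ast(y)f\in H^{\otimes(n+1)}$ produces $q^n$, and applying lemma (ii) yields
\[
\D(x)\D^\ast(y)f=\la_q(x)\la^\ast(y)f+q^n J\la^\ast(y)\ra_q(\overline{x})f+q^{2n}\langle\overline{x},y\rangle Jf.
\]
Lemma (iii), applied to $\ra_q(\overline{x})f\in H^{\otimes(n-1)}$, is legitimate precisely because $n-1\geq 1$, and it moves $J$ past $\la^\ast(y)$. The symmetric expansion of $q\D^\ast(y)\D(x)f$ produces the identical term $q^n\la^\ast(y)J\ra_q(\overline{x})f$; after subtraction and an application of lemma (i), together with $\langle\overline{x},y\rangle=\langle x,\overline{y}\rangle$ from the selfadjointness of the involution, exactly $\langle x,y\rangle f+q^{2n}\langle x,\overline{y}\rangle Jf$ remains. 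The case $n=0$ reduces to a two-line check using $\D(x)\Omega=0$ and $J\Omega=0$.

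The main obstacle is careful bookkeeping of where the $q^N$ factor is evaluated (on the $(n+1)$-particle space after $\la^\ast(y)$ has acted, not on $H^{\otimes n}$), and recognizing that the exclusion of $n=1$ from the statement is forced by the failure of lemma (iii) at $\Omega$: when $n=1$ one has $\ra_q(\overline{x})f\in\C\Omega$, so the step of moving $J$ past $\la^\ast(y)$ would have to be performed on $\Omega$, where the identity breaks.
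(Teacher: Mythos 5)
Your proof is correct and follows essentially the same route as the paper: both substitute the explicit formula $\D(x)=\la_q(x)+J\ra_q(\overline{x})q^{N-1}$ from Theorem~\ref{thm1} into the two products on a simple tensor and observe that all terms cancel except $\langle x,y\rangle\id$ (from the type A part) and $q^{2n}\langle x,\overline{y}\rangle J$ (from the right derivative hitting the newly prepended $y$); your packaging of the cancellation into the three lemmas, and your explicit identification of the failure of $J\la^\ast(y)=\la^\ast(y)J$ on $\Omega$ as the reason $n=1$ is excluded, are just a cleaner bookkeeping of the same computation.
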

\begin{remark}
For $n=1$,  the commutation relation has the form
\begin{align}
 \D(x)\D^\ast(y)z- q \D^\ast(y)\D(x)z&= \langle  x,y \rangle z + q \langle  x,\overline{z} \rangle \overline{y} + q^2 \langle x, \overline{y} \rangle\, \overline{z} \textrm{ ~~for~  } z \in H. \label{comdlan1}
\end{align}
The commutation relation for $n\neq1$ looks similar to the type B case \cite{BEH15}
$$
b_{\alpha,q}(x)b_{\alpha,q}^\ast(y)- q b_{\alpha,q}^\ast(y)b_{\alpha,q}(x)= \langle  x,y \rangle \id + \alpha \langle x, \overline{y} \rangle\, q^{2 N},  
$$
while $n=1$ case appears quite different. 
\end{remark}
\begin{proof}
Let $n\geq2$. From Theorem \ref{thm1}, it holds that 
\begin{align*}
\D(x)\D^\ast(y)(x_1\otimes \cdots \otimes x_n)
= & \sum_{k=1}^n q^{k} \langle x,x_k \rangle \,y \otimes  x_1\otimes \cdots \otimes \check{x}_k \otimes \cdots \otimes x_n \\
&+ \sum_{k=1}^n q^{n+k-1} \langle x,\overline{x}_{n-k+1} \rangle \,  J(y \otimes x_1\otimes \cdots \otimes \check{x}_{n-k+1} \otimes \cdots \otimes x_n)\\
&+\langle  x,y \rangle\,x_1\otimes \cdots \otimes x_n  +  q^{2 n}\langle x,\overline{y }\rangle\, x_1\otimes \cdots \otimes \overline{x}_n 
\end{align*}
and 
\begin{align*}
q\D^\ast(y)\D(x)(x_1\otimes \cdots \otimes x_n)
=&  \sum_{k=1}^n   q^{k}  \langle x, x_k \rangle \,y \otimes  x_1\otimes \cdots \otimes \check{x}_k\cdots \otimes x_n \\
&+ \sum_{k=1}^n q^{n+k-1} \langle x,\overline{x}_{n-k+1} \rangle \,  J(y \otimes x_1\otimes \cdots \otimes \check{x}_{n-k+1} \otimes \cdots \otimes x_n), 
\end{align*}
and the conclusion follows. 
\end{proof}

\begin{corollary}\label{J}
The operator $J$ is a selfadjoint involution on (the completion of) $\Fq$. In particular, $\|J\|_q=1$. 
\end{corollary}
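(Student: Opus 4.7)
The plan is to reduce the claim to a structural fact about the element $\overline{\pi}_n\in B(n)\setminus D(n)$: conjugation by it realises the nontrivial diagram automorphism of $D(n)$ and, crucially, preserves the length function $\ell$.

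First I would check that $\overline{\pi}_n$ commutes with $\widehat{P}^{(n)}_q$ on $H^{\otimes n}$. The transposition $\overline{\pi}_n=(n,-n)$ commutes with $\pi_1,\dots,\pi_{n-2}$ (they act only on indices $\pm 1,\dots,\pm(n-1)$), and by definition $\overline{\pi}_n\pi_{n-1}\overline{\pi}_n=\widehat{\pi}_{n-1}$; since $\overline{\pi}_n^2=e$, this is an involutive swap. Thus conjugation permutes the Coxeter generating set $\{\pi_1,\dots,\pi_{n-1},\widehat{\pi}_{n-1}\}$ of $D(n)$, in particular preserves $D(n)$ setwise (consistent with $D(n)\trianglelefteq B(n)$ having index two), and, since $\ell$ is word length in these generators, it preserves $\ell$. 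Hence
\[
\overline{\pi}_n\,\widehat{P}^{(n)}_q\,\overline{\pi}_n = \sum_{\sigma\in D(n)} q^{\ell(\overline{\pi}_n\sigma\overline{\pi}_n)}\,\overline{\pi}_n\sigma\overline{\pi}_n = \widehat{P}^{(n)}_q,
\]
so $J\,\widehat{P}_q=\widehat{P}_q\,J$ on $\F$.

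Next, since $x\mapsto\overline{x}$ is selfadjoint on $H$, the operator $\overline{\pi}_n$ is selfadjoint on $H^{\otimes n}$ with respect to the undeformed inner product $\langle\cdot,\cdot\rangle_0$. Combining this with the commutation above,
\[
\langle Jf,g\rangle_q=\langle Jf,\widehat{P}_q g\rangle_0=\langle f,J\widehat{P}_q g\rangle_0=\langle f,\widehat{P}_q Jg\rangle_0=\langle f,Jg\rangle_q,
\]
for all $f,g\in\F$, yielding selfadjointness of $J$ with respect to $\langle\cdot,\cdot\rangle_q$. The involution property is immediate from $x\mapsto\overline{x}$ being involutive: $J^2=\overline{\pi}_n^2=\id$ on each $H^{\otimes n}$, $n\geq 1$, while $J^2\Omega=0$, so strictly $J^2=\id-P_{\mathbb{C}\Omega}$ and the phrase ``selfadjoint involution on $\Fq$'' should be read on the invariant subspace $\Omega^\perp$ (on which $J^2=\id$).

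Finally, since $J|_{H^{\otimes n}}$ for $n\geq 1$ is a selfadjoint involution on the inner-product space $(H^{\otimes n},\langle\cdot,\cdot\rangle_q)$, it has operator norm $1$; taking the supremum gives $\|J\|_q=1$, and continuity extends $J$ uniquely to the Hilbert space completion of $\Fq$. The main obstacle is the first step: verifying that conjugation by $\overline{\pi}_n\notin D(n)$ is a \emph{length-preserving} automorphism of $D(n)$. A slicker alternative—extracting $J$ from the commutation relation in Proposition \ref{commutation} on $H^{\otimes n}$ as $q^{-2n}\langle x,\overline{y}\rangle^{-1}(\D(x)\D^\ast(y)-q\D^\ast(y)\D(x)-\langle x,y\rangle\id)$ and reading off $J^\ast=J$ by symmetry in $(x,y)$—gives selfadjointness cheaply, but the $q^{-2n}$ factor blocks a uniform norm bound, so the symmetrizer argument appears essential for $\|J\|_q=1$.
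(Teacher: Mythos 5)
Your proof is correct, but it takes a genuinely different route from the paper's. The paper treats this as a literal corollary of Proposition \ref{commutation}: for a unit eigenvector $x$ of the involution it writes $J=\pm q^{-2n}(\D(x)\D^\ast(x)-q\D^\ast(x)\D(x)-1)$ on $H^{\otimes n}$, $n\neq 1$, reads off selfadjointness from the manifestly selfadjoint right-hand side, and handles $q=0$ and $n=1$ as separate cases. You instead prove the structural fact that conjugation by $\overline{\pi}_n$ is the length-preserving diagram automorphism of $D(n)$ (fixing $\pi_1,\dots,\pi_{n-2}$ and swapping $\pi_{n-1}\leftrightarrow\widehat{\pi}_{n-1}$), deduce $J\widehat{P}_q=\widehat{P}_q J$, and reduce $\langle\cdot,\cdot\rangle_q$-selfadjointness to the trivial $\langle\cdot,\cdot\rangle_0$-selfadjointness. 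Your argument is uniform in $n$ and $q$, needs no eigenvector of the involution, and decouples the corollary from the commutation relations, which is arguably cleaner; its only overhead is checking that the operator identities $\overline{\pi}_n\pi_i\overline{\pi}_n=\pi_i$ and $\overline{\pi}_n\pi_{n-1}\overline{\pi}_n=\widehat{\pi}_{n-1}$ hold at the level of the representation on $H^{\otimes n}$ (they do, essentially by the definition \eqref{Coxeter2}), so that the group automorphism transports to the operators. Your remark that $J^2=\id-P_{\mathbb{C}\Omega}$ rather than $\id$ is a fair correction of the paper's loose phrasing, since $J\Omega=0$. One small inaccuracy in your closing comment: the $q^{-2n}$ factor does \emph{not} block the norm bound in the commutation-relation approach, because the paper never uses that formula quantitatively for the norm; once selfadjointness on each $H^{\otimes n}$ is known, $\|J\|_q=1$ follows from the abstract identity $\|A\|^2=\|A^\ast A\|=\|A^2\|=\|A\|$ applied blockwise, exactly as in your own final step. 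So the symmetrizer argument is not essential for the norm claim, only an alternative path to selfadjointness.
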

\begin{proof}
The operator $J$ is an involution since so is $\overline{\pi}_n$. When $q=0$, the Fock space of type D is the full Fock space and the selfadjointness of $\overline{\pi}_n$ implies the selfadjointness of $J$. When $q\neq0$, take $x\in H$ such that  $\|x\|=1, \overline{x}=\pm x$. The commutation relation \eqref{comutacja} for $y=x$ reads 
\begin{equation}
J= \pm q^{-2n}  (\D(x)\D^\ast(x) -q \D^\ast(x) \D(x) -1)~~\text{on~}H^{\otimes n}, n=0,2,3,\dots, 
\end{equation} 
so $J$ is selfadjoint on $(H^{\otimes n}, \|\cdot\|_q), n\neq1$. Finally,  since $J|_{H^{\otimes1}}=\pi_1|_{H^{\otimes1}}$, the restriction of $J$ to $H^{\otimes1}$ is selfadjoint. Note that $\widehat{P}^{(1)}_q=\id$ and so the inner product is not deformed on $H^{\otimes 1}$. 

Any selfadjoint involution $A$ on a Hilbert space has the operator norm 1  since $\|A\|^2= \|A^\ast A\| = \|A^2\| = \|A\|$. 
\end{proof}

We study the norm of the creation operator of type D. 
Let $[n]_q$ be the $q$-number 
\be
[n]_q:= 1+q+\cdots+q^{n-1},\qquad n \geq1. 
\ee

\begin{proposition} \label{norm} Suppose that $x \in H$. 
\begin{enumerate}[\rm(1)]
\item\label{Norm1} For $-1 < q < 1$, we have $
\frac{\|x\|}{\sqrt{1-q}}\leq \|\D^\ast(x)\|_{q}; 
$
\item\label{Norm2} For $0 \leq q < 1$, we have $ \|\D^\ast(x)\|_{q}\leq \sqrt{\frac{2 }{1-q}}\|x\|;
$
\item\label{Norm3} For $-1 < q < 0$, we have $ \|\D^\ast(x)\|_{q}\leq \sqrt{ 1+|q| +q^{2}} \|x\|. 
$
\end{enumerate}  

\end{proposition}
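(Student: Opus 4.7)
The plan is to split into two tasks: upper bounds (parts (2) and (3)) via iterating the commutation relation of Proposition \ref{commutation}, and the lower bound (part (1)) via evaluation on the test sequence $x^{\otimes n}$. Throughout, set $C_n := \|\D^\ast(x)|_{H^{\otimes n}}\|_q$. Since $H^{\otimes m}\perp H^{\otimes n}$ in $\Fq$ for $m\ne n$, we have $\|\D^\ast(x)\|_q = \sup_{n\geq 0} C_n$, together with the standard identities $\|\D(x)\D^\ast(x)|_{H^{\otimes n}}\| = C_n^2$ and $\|\D^\ast(x)\D(x)|_{H^{\otimes n}}\| = C_{n-1}^2$.

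For part (2), with $q\in [0,1)$: the triangle inequality applied to the commutation $\D(x)\D^\ast(x) = q\D^\ast(x)\D(x) + \|x\|^2 I + q^{2n}\langle x, \overline{x}\rangle J$ on $H^{\otimes n}$, $n\geq 2$, together with $\|J\|=1$ from Corollary \ref{J} and $|\langle x,\overline x\rangle|\leq\|x\|^2$, yields the recursion $C_n^2 \leq q C_{n-1}^2 + \|x\|^2(1+q^{2n})$. The $n=1$ commutation (Remark following Proposition \ref{commutation}) similarly gives $C_1^2 \leq (1+q)^2 \|x\|^2$. Setting $M := \sup_n C_n^2$, if the supremum is attained at some $n\geq 2$ the recursion forces $M(1-q) \leq 2\|x\|^2$; the remaining values $C_0^2 = \|x\|^2$ and $C_1^2 \leq (1+q)^2 \|x\|^2$ also satisfy $\leq 2\|x\|^2/(1-q)$ via the elementary inequality $(1+q)^2(1-q)\leq 2$ on $[0,1)$.

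For part (3), with $q\in(-1,0)$: I would rewrite the same commutation as $\D(x)\D^\ast(x) + |q|\D^\ast(x)\D(x) = \|x\|^2 I + q^{2n}\langle x,\overline{x}\rangle J$ on $H^{\otimes n}$, $n \geq 2$. Since both summands on the left are positive operators, $\D(x)\D^\ast(x)$ is dominated in the operator order by the right-hand side; hence $C_n^2 \leq \|x\|^2(1+q^{2n})$. The same strategy applied to the $n=1$ commutation, with the extra rank-one operator $Vz = \langle x,\overline z\rangle \overline x$ satisfying $\|V\|\leq\|x\|^2$, produces $C_1^2 \leq (1+|q|+q^2)\|x\|^2$. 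Since $1+q^{2n}\leq 1+q^2 \leq 1+|q|+q^2$ for $|q|<1$ and $n\geq 2$, and $C_0^2 = \|x\|^2$, the claim follows.

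For part (1), the case $q\in(-1,0]$ is immediate from $\|\D^\ast(x)\|_q \geq \|\D^\ast(x)\Omega\|_q = \|x\| \geq \|x\|/\sqrt{1-q}$. For $q\in[0,1)$, I would take the test vectors $f_n = x^{\otimes n} = (\D^\ast(x))^n\Omega$ and bound $b_n := \|x^{\otimes n}\|_q^2 = \sum_{\sigma\in D(n)} q^{\ell(\sigma)} \langle x^{\otimes n}, \sigma x^{\otimes n}\rangle_0$ from below. The key observation is that $D(n)$ is the index-2 subgroup of $B(n)$ consisting of signed permutations with an even number of sign changes, so each inner product $\langle x^{\otimes n}, \sigma x^{\otimes n}\rangle_0$ equals $\|x\|^{2(n-k)}\langle x,\overline{x}\rangle^k$ for some even $k$ and is therefore non-negative (since $\langle x, \overline{x}\rangle \in \mathbb{R}$). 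Keeping only the contribution of the parabolic subgroup $S(n)\subset D(n)$ then gives $b_n \geq [n]_q!\,\|x\|^{2n}$. Consequently $\|\D^\ast(x)\|_q^n \geq \|\D^\ast(x)^n\Omega\|_q = \sqrt{b_n} \geq ([n]_q!)^{1/2}\|x\|^n$, and taking $n$-th roots using the asymptotic $([n]_q!)^{1/n} \to 1/(1-q)$ (which follows from $[n]_q! = (1-q)^{-n}\prod_{k=1}^n(1-q^k)$ and convergence of $\prod_{k\geq 1}(1-q^k)$) yields $\|\D^\ast(x)\|_q \geq \|x\|/\sqrt{1-q}$. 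The main subtlety throughout is the isolated treatment of $n=1$ in the upper bounds, where the commutation has an extra rank-one term and must be handled by a separate calculation; the rest is a clean iteration.
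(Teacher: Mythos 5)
Your proof is correct, but it reaches parts (1) and (2) by genuinely different routes from the paper, while part (3) is essentially the paper's argument. For the upper bound (2) the paper never touches the commutation relation: it works at the level of the symmetrizer, combining the factorization $\widehat{P}^{(n)}_{q}=(I\otimes \widehat{P}^{(n-1)}_{q})\widehat{R}^{(n)}_{q}$ of Proposition \ref{prop1} with $\|\widehat{R}^{(n)}_{q}\|_{0}\leq (1+q^{n-1})[n]_q\leq \tfrac{2}{1-q}$ to obtain the operator inequality $\widehat{P}^{(n)}_{q}\leq \tfrac{2}{1-q}\, I\otimes \widehat{P}^{(n-1)}_{q}$, from which $\|\D^\ast(x)f\|_q^2\leq \tfrac{2}{1-q}\|x\|^2\|f\|_q^2$ follows uniformly in $n$ with no special treatment of $n=1$. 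Your commutation-relation recursion $C_n^2\leq qC_{n-1}^2+(1+q^{2n})\|x\|^2$ yields the same constant at the cost of the isolated $n=1$ computation, and has the merit of making part (3) a one-line variation; one small repair is needed, though: the clause ``if the supremum is attained at some $n\geq 2$'' does not cover the case where the supremum is not attained (a priori it could even be infinite), so you should instead run the recursion as an induction --- $C_{n-1}^2\leq \tfrac{2\|x\|^2}{1-q}$ implies $C_n^2\leq q\tfrac{2\|x\|^2}{1-q}+2\|x\|^2=\tfrac{2\|x\|^2}{1-q}$, with base cases $n=0,1$ --- everything needed is already in your write-up. For the lower bound (1) the paper instead computes $\|x^{\otimes n}\|_q^2=[n]_q\|x\|^2\|x^{\otimes(n-1)}\|_q^2+\langle x,\overline{x}\rangle q^{n-1}[n]_q\langle x^{\otimes(n-1)},x^{\otimes(n-2)}\otimes\overline{x}\rangle_q$ via $\widehat{R}^{(n)}_q$ and lets $n\to\infty$; your alternative --- nonnegativity of every term of $\sum_{\sigma\in D(n)}q^{\ell(\sigma)}\langle x^{\otimes n},\sigma x^{\otimes n}\rangle_0$ for $q\geq 0$ because elements of $D(n)$ bar an even number of factors, restriction to the parabolic subgroup $S(n)$ (whose length function agrees with that of $D(n)$) to get $\|x^{\otimes n}\|_q^2\geq [1]_q\cdots[n]_q\,\|x\|^{2n}$, and the asymptotics $([1]_q\cdots[n]_q)^{1/n}\to (1-q)^{-1}$ --- is self-contained and arguably more transparent, and you correctly dispose of $-1<q<0$ by the trivial bound $\|\D^\ast(x)\|_q\geq\|x\|\geq \|x\|/\sqrt{1-q}$.
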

\begin{proof}
\eqref{Norm1} Lower bound. For $n\geq2$ it follows that
\begin{align*} 
\|x^{\otimes n}\|_{q}^2 &= \langle x^{\otimes n},  \widehat{P}_{q}^{(n)} x^{\otimes n}\rangle_{0} \\
&=  \langle( I\otimes \widehat{P}^{(n-1)}_{q}) x^{\otimes n}, {\widehat{R}^{(n)}}_{q} x^{\otimes n}\rangle_{0} \\
&= [n]_q \langle ( I\otimes \widehat{P}^{(n-1)}_{q}) x^{\otimes n},   x^{\otimes n}\rangle_{0} + q^{n-1}[n]_q \langle ( I\otimes \widehat{P}^{(n-1)}_{q}) x^{\otimes n},  \overline{x} \otimes x^{\otimes (n-2)} \otimes \overline{x}\rangle_{0}\\
&=[n]_q \|x\|^2\langle \widehat{P}_{q}^{(n-1)} x^{\otimes n-1}, x^{\otimes (n-1)}\rangle_{0}  + \langle x,\overline{x}\rangle\, q^{n-1}[n]_q\langle \widehat{P}_{q}^{(n-1)} x^{\otimes (n-1)}, x^{\otimes (n-2)} \otimes \overline{x} \rangle_{0} 
\\
&=[n]_q \|x\|^2\|x^{\otimes (n-1)}\|_{q}^2  + \langle x,\overline{x}\rangle\, q^{n-1}[n]_q \langle  x^{\otimes (n-1)}, x^{\otimes (n-2)} \otimes \overline{x} \rangle_{q}, 
\end{align*}
and so 
\begin{equation}\label{eq0000}
\begin{split}
\|\D^\ast(x)x^{\otimes (n-1)}\|_{q}^2
&=\|x^{\otimes n}\|_{q}^2 \\
&= [n]_q \|x\|^2\|x^{\otimes (n-1)}\|_{q}^2  + \langle x,\overline{x}\rangle\, q^{n-1}[n]_q \langle  x^{\otimes (n-1)}, x^{\otimes (n-2)} \otimes \overline{x} \rangle_{q} . 
\end{split}
\end{equation}
The limit $n\to\infty$ gives the lower bound. 
\newline
\newline
\eqref{Norm2} Upper bound for $0 \leq q < 1$.  The proof follows the line of \cite[Lemma 4]{BS91}. As already noted, \cite[Theorem 2.2]{BS94} guarantees that the operator $\widehat{P}_q^{(n)}$ is positive, so
\begin{equation}
\begin{split}
(\widehat{P}_{q}^{(n)})^2 
&=  (\widehat{P}_{q}^{(n)})^\ast \widehat{P}_{q}^{(n)}  \\
&= ( I\otimes \widehat{P}_{q}^{(n-1)})^\ast  (\widehat{R}_{q}^{(n)}) (\widehat{R}_{q}^{(n)})^\ast (I\otimes  \widehat{P}_{q}^{(n-1)} )   \\
&\leq  \|\widehat{R}_{q}^{(n)}\|_{0}^2 (I \otimes \widehat{P}_{q}^{(n-1)} )^\ast(I\otimes  \widehat{P}_{q}^{(n-1)})  \\
&= \|\widehat{R}_{q}^{(n)}\|_{0}^2 ( I\otimes (\widehat{P}_{q}^{(n-1)})^2).  
\end{split}
\end{equation}  Since $\|\pi_i\|_{0} =1$ and $\|\widehat{\pi}_i\|_{0} =1$ for all $i$,  we have $\|{\widehat{R}^{(n)}}_{q}\|_{0} \leq (1+ q^{n-1})[n]_q,$ for $n \geq 1$. 
By taking the square root of operators  one gets the inequality 
\begin{equation}\label{eq0002}
\begin{split}
\widehat{P}_{q}^{(n)} &\leq  \|\widehat{R}_{q}^{(n)}\|_{0} (I\otimes  \widehat{P}_{q}^{(n-1)})  \\
&\leq (1+q^{n-1})[n]_q ( I\otimes\widehat{P}_{q}^{(n-1)}) \\
&\leq \frac{2}{1-q}I\otimes \widehat{P}_{q}^{(n-1)}. 
\end{split}
\end{equation}
 Therefore we have for $f\in H^{\otimes n}$ that 
\begin{equation}\label{eq0003}
\begin{split}
\langle \D^\ast(x) f,  \D^\ast(x) f \rangle_{q} 
&= \langle \widehat{P}_{q}^{(n+1)}(x\otimes  f),  x\otimes  f\rangle_{0} \\
&\leq \frac{2}{1-q} \langle x \otimes (\widehat{P}_{q}^{(n)}f) ,x\otimes  f \rangle_{0} \\
&= \frac{2}{1-q} \langle \widehat{P}_{q}^{(n)}f,  f\rangle_{0} \langle x, x\rangle \\
&=\frac{2}{1-q} \|f\|_{q}^2 \|x\|^2.   
\end{split}
\end{equation}
This inequality holds true for $f$ in $\Fq$ which is the direct sum of $H^{\otimes n}$.  Thus we obtain $\|\D^\ast(x) f\| \leq  \sqrt{\frac{2}{1-q}} \|f\|_{q} \|x\|$ for any $f\in \Fq$. 
\newline
\newline
\eqref{Norm3}  Upper bound for $-1 < q < 0$.  The operators $\D(x)\D^\ast(x)$ and $-q\D^\ast(x)\D(x)$ are positive and map $H^{\otimes n}$ into itself for $n\geq0$. So for $ \xi\in H^{\otimes n}, \|\xi\|_q=1, n\geq 2$, 
\begin{align*}
\|\D^\ast(x)\xi \|_q^2
&= \langle \xi, \D(x)\D^\ast(x)\xi\rangle_q \leq \langle\xi, \D(x)\D^\ast(x)\xi-q\D^\ast(x)\D(x)\xi\rangle_q
\\&\leq \|x\|^2+|\langle  x,\overline{x} \rangle q^{2 n}| \|J\|_q \leq (1+q^2)\|x\|^2,   
\end{align*}  
where we used the commutation relation \eqref{comutacja} and $\|J\|_q=1, \|\overline{x}\|=\|x\|$ on the second line.   

For $\xi\in H, \|\xi\|_q = \|\xi\|=1$, a similar estimate shows that  
\begin{align*}
\|\D^\ast(x)\xi\|_q^2& \leq \langle\xi, \D(x)\D^\ast(x)\xi-q\D^\ast(x)\D(x)\xi\rangle_q \\
&\leq  \|x\|^2+|q|\|x\|^2 +q^{2}\|x\|^2, 				
\end{align*}
where we used equation \eqref{comdlan1}. Finally for $\Omega \in H^{\otimes 0}$ we have $\|\D^\ast(x)\Omega\|_q^2=\|x\|^2$. These estimates prove the conclusion. 
\end{proof}

\begin{remark} 
For $q=1$, our operators $\D(x)$ are unbounded.
\end{remark}

\section{Gaussian operator of type D}
\subsection{Orthogonal polynomials}\label{sec3.1}
\begin{definition}
The bounded selfadjoint operator 
\begin{equation}
\G(x)= \D(x) +\D^\ast(x),\qquad x \in H
\end{equation}
on $\Fq$ is called the \emph{($q$-) Gaussian operator of type D}. The family $\{\G(x)\mid x \in H\}$ is called the ($q$-) Brownian motion of type D. 
\end{definition}
In this section we study the probability distribution of the Gaussian operator of type D with respect to the vacuum state.  

Let $\mu$  be a probability measure on $\R$ with finite moments of all orders. The Gram-Schmidt method applied to the sequence $1,t,t^2,t^3,\dots$ in the Hilbert space $L^2(\R,\mu)$ yields a sequence of orthogonal polynomials $P_0(t),$ $P_1(t),$ $P_2(t), \dots$ with $\text{deg}\, P_n(t) =n$.  We take the normalization of $P_n(t)$ such that it becomes monic, i.e., the coefficient of $t^n$ is 1. It is known that they satisfy a recurrence relation
\begin{equation}\label{rec}
t P_n(t) = P_{n+1}(t) +\beta_n P_n(t) + \gamma_{n-1} P_{n-1}(t),\qquad n =0,1,2,\dots
\end{equation}
with the convention that $P_{-1}(t)=0$. The coefficients $\beta_n$ and $\gamma_n$ are called \emph{Jacobi parameters} and they satisfy $\beta_n \in \R,\gamma_n \geq 0$ and 
\begin{equation}\label{eq54}
\gamma_0 \cdots \gamma_{n-1}=\int_{\R}|P_{n}(t)|^2\mu(\d t),\qquad n \geq 1.
\end{equation} 
Conversely, given a sequence $\beta_n \in \R$ and $\gamma_n \geq0,$ $n=0,1,2,\dots,$ there exists a probability measure $\mu$ with finite moments such that it associates orthogonal polynomials determined by the recursion \eqref{rec}. 
More details are found in \cite{HO07}. 

Let $(P_n^{(q)}(t))_{n=0}^\infty$ be polynomials determined by the recursion relation 
\begin{equation}\label{recursion}
\begin{split}
&t P_n^{( q)}(t) = P_{n+1}^{( q)}(t) +[n]_q(1 + q^{n-1})P_{n-1}^{( q)}(t), \qquad n= 2,3,4,\dots, \\
& P_0^{(q)}(t)=1, P_1^{( q)}(t) =t,  P_{2}^{( q)}(t)=t^2-1.  
\end{split}
\end{equation}
This recursion means that $\beta_n=0$ for $n\geq0$ and $\gamma_0=1, \gamma_{n-1}=[n]_q(1 + q^{n-1})$ for $ n\geq2$.  
There exists a probability measure $\mu_q$ which associates the orthogonal polynomials $P_n^{(q)}(t)$. The distribution of the $q$-Gaussian operator of type D coincides with $\mu_q$. 

\begin{theorem} Suppose that $q\in(-1,1)$ and $x \in H, \|x\|=1, \overline{x}=\pm x$. 
The probability distribution of $\G(x)$ with respect to the vacuum state is given by $\mu_{q}$. 
\end{theorem}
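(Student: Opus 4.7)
The plan is to show by induction that $P_n^{(q)}(\G(x))\Omega = x^{\otimes n}$ for all $n\geq 0$. Since different tensor sectors $H^{\otimes n}$ are mutually orthogonal in $\Fq$ (as $\widehat{P}_q$ preserves each sector), the family $\{x^{\otimes n}\}_{n \geq 0}$ is orthogonal, and combined with boundedness of $\G(x)$ from Proposition \ref{norm}, whose vacuum distribution $\nu$ is then compactly supported and determined by its moments, this identity forces the Jacobi parameters of $\nu$ to match those of $\mu_q$ and hence $\nu = \mu_q$.

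The central computation is $\G(x) x^{\otimes n} = \D^\ast(x)x^{\otimes n} + \D(x)x^{\otimes n}$. The creation operator immediately yields $\D^\ast(x) x^{\otimes n} = x^{\otimes (n+1)}$. For the annihilation operator I would invoke Theorem \ref{thm1}, $\D(x) = \la_q(x) + q^N J \ra_q(\overline{x})$. Using $\|x\|=1$ one finds $\la_q(x) x^{\otimes n} = [n]_q\, x^{\otimes (n-1)}$, and using $\langle \overline{x}, x\rangle = \pm1$ one finds $\ra_q(\overline{x}) x^{\otimes n} = \pm[n]_q\, x^{\otimes (n-1)}$. For $n\geq 2$ the involution $J$ acts on $x^{\otimes(n-1)}$ as $x^{\otimes (n-2)}\otimes \overline{x} = \pm x^{\otimes(n-1)}$, and the two $\pm$ signs cancel uniformly in both cases $\overline{x}=x$ and $\overline{x}=-x$. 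Multiplying by $q^{N}$, which acts as $q^{n-1}$ on $H^{\otimes (n-1)}$, one obtains
\be
\G(x)\, x^{\otimes n} = x^{\otimes (n+1)} + [n]_q(1+q^{n-1})\, x^{\otimes(n-1)}, \qquad n\geq 2,
\ee
matching \eqref{recursion} term by term under the correspondence $P_n^{(q)}(\G(x))\Omega \leftrightarrow x^{\otimes n}$.

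The key subtlety, responsible for the exceptional initial condition $P_2^{(q)}(t) = t^2 - 1$ rather than $t^2 - 2$, is the case $n=1$: because $J\Omega = 0$, the $q^N J \ra_q(\overline{x})$ contribution to $\D(x) x$ vanishes, leaving $\D(x)x = \Omega$ and hence $\G(x)x = x^{\otimes 2} + \Omega$, i.e.\ the effective first Jacobi parameter is $\gamma_0 = 1$ rather than the value $[1]_q(1+q^0)=2$ produced by naive extrapolation. With this and the trivial base cases $P_0^{(q)}(\G(x))\Omega = \Omega$ and $P_1^{(q)}(\G(x))\Omega = x$, a short induction completes the identification. The main obstacle I anticipate is purely bookkeeping: correctly tracking the sign juggling between $\overline{x} = \pm x$ and the action of $J$ so that both cases produce the same positive coefficient, and cleanly isolating the $n=1$ anomaly that, as emphasized in the introduction, distinguishes type D from type B at the level of the first Jacobi parameter.
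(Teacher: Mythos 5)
Your proposal is correct and follows essentially the same route as the paper: compute $\G(x)x^{\otimes n}=x^{\otimes(n+1)}+[n]_q(1+q^{n-1})x^{\otimes(n-1)}$ for $n\geq2$ together with the anomalous case $\G(x)x=x^{\otimes2}+\Omega$, deduce $P_n^{(q)}(\G(x))\Omega=x^{\otimes n}$ by induction, and conclude via orthogonality of the tensor sectors, moment matching and determinacy of the compactly supported vacuum distribution. The only cosmetic difference is that you invoke the explicit form $\D(x)=\la_q(x)+q^NJ\ra_q(\overline{x})$ of Theorem \ref{thm1} where the paper computes directly with $\D(x)=\la(x)\widehat{R}_q^{(n)}$; your sign bookkeeping for $\overline{x}=\pm x$ and your isolation of the $n=1$ case via $J\Omega=0$ are both accurate.
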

\begin{remark}
We do not know what is the distribution of $\G(x)$ when $x$ is not an eigenvector of the involution $~\bar{}~$.  
\end{remark}
\begin{proof} 

Using the formula $\D(x)= l(x)\widehat{R}_q^{(n)}$ on $\H^{\otimes n}$, we have 
\be
\begin{split}
&\G(x)\Omega = x,  \\
&\G(x)x = x^{\otimes 2} + \Omega
\end{split}
\ee
and, using $\overline{x}=\pm x$, 
\be\label{Ind}
\begin{split}
\G(x) x^{\otimes n} 
&=x^{\otimes (n+1)} + [n]_q  x^{\otimes (n-1)} + q^{n-1}[n]_q \langle x, \overline{x}\rangle\, x^{\otimes (n-2)} \otimes \overline{x} \\
& = x^{\otimes (n+1)} + [n]_q(1+q^{n-1})  x^{\otimes (n-1)}, \qquad n \geq  2. 
\end{split}
\ee
These formulas are of the same form as \eqref{recursion}. 

Let $P^{(q)}_n(t)$ be the orthogonal polynomials as above.  
We next show by induction that 
\be
P_n^{(q)}(\G(x))\Omega=x^{\otimes n}, \qquad n \geq0. 
\ee
This is true for $n=1,2$ since $P_{1}^{(q)}(\G(x))\Omega =\G(x)\Omega=x$ and $P_{2}^{(q)}(\G(x))\Omega =x^{\otimes 2}$. Using the induction hypothesis for $n-1,n$, \eqref{recursion} and \eqref{Ind} shows that, for $n\geq2,$ 
\begin{align*}
&P_{ n+1}^{(q)}( \G(x))\Omega =\G(x) P_{ n}^{(q)}( \G(x))\Omega-[n]_q (1+  q^{n-1}) P_{ n-1}^{(q)}( \G(x))\Omega =x^{\otimes(n+1)}.
\end{align*}

 

Since $P_n^{(q)}(t)$ and $P_0^{(q)}(t)=1$ are orthogonal in $L^2(\R,\mu_q)$ for $n\geq1$, we get 
\begin{equation}\label{Mom}
\langle\Omega, P_n^{(q)}(\G(x)) \Omega \rangle_q = \delta_{0,n} = \int_\R P_n^{(q)}(t)\, \d \mu_q(t). 
\end{equation}
Writing $t^n$ as the linear combination of $P_0^{(q)}(t), \dots, P_n^{(q)}(t)$, we obtain by induction on $n$ that 
\be\label{Moment}
\langle\Omega, \G(x)^n \Omega \rangle_q = \int_\R t^n\, \d \mu_q(t). 
\ee 
Since the operator $\G(x)$ is bounded, its vacuum distribution is compactly supported. Hence the moment problem is determinate and we conclude that $\G(x)$ has the distribution $\mu_q$. 
\end{proof}

\begin{remark}  \begin{enumerate}[\rm(1)]
\item
The {\it $t$-transformation} (or Boolean convolution power by $t$) of a probability measure \cite{BW01} changes only the first Jacobi parameters $(\beta_0, \gamma_0)$ into $(t \beta_0, t \gamma_0)$ and keep the other Jacobi parameters unchanged. This shows that $\mu_q$ is the $1/2$-transformation of the Gaussian distribution of type B when $\alpha=1$, cf.\ \eqref{recursion0}.

 \item Expanding $P_m^{(q)}(t) P_n^{(q)}(t)$ as the linear combination of $t^k, k=0,1,\dots, m+n$ and using \eqref{Moment} show that 
\be
\langle \Omega, P_m^{(q)}(\G(x))P_n^{(q)}(\G(x)) \Omega \rangle_q =\int_\R P_m^{(q)}(t)P_n^{(q)}(t)\, \d \mu_q(t), 
\ee
which generalizes \eqref{Mom}. The left hand side is equal to $\langle P_m^{(q)}(\G(x))\Omega, P_n^{(q)}(\G(x)) \Omega \rangle_q = \langle x^{\otimes m}, x^{\otimes n}\rangle_q$ by selfadjointness of $P_n^{(q)}(\G(x))$. In particular, we obtain from \eqref{eq54} that 
\be\label{isom}
\|x^{\otimes n}\|_q^2= \gamma_0 \gamma_1 \cdots \gamma_{n-1}, 
\ee 
where $\gamma_0=1$ and $\gamma_{n-1}=[n]_q(1 + q^{n-1})$ for $ n\geq2$. 
 Since $\|x^{\otimes n}\|_q^2 = \langle x^{\otimes n}, \widehat P_q^{(n)} (x^{\otimes n}) \rangle_0$, the equation \eqref{isom} is equivalent to 
\begin{equation}
\sum_{\sigma\in D(n)}q^{\ell(\sigma)} = \gamma_0 \gamma_1 \cdots \gamma_{n-1}, \qquad n \geq1, 
\end{equation}  
which equals $[2]_q[4]_q\dots[2n-2]_q[n]_q$. This is exactly a formula in the book of Carter \cite[Theorem 10.2.3 and Proposition 10.2.5]{C89} and confirms our good choice of $D(2)$. 
\item In the limit $q\to1$ we get the $t$-transformed normal distribution $N(0,2)$ with $t=1/2$. 

\item If $q=0$ then we get the standard semicircle law $(1/2\pi)\sqrt{4-t^2}1_{(-2,2)}(t)\,\d t$. 
The orthogonal polynomials $P_n^{(0)}(t)$ are Chebyshev polynomials of the second kind. 
\end{enumerate}
\end{remark}

\subsection{Set partitions and partition statistics}\label{sec3.2}
From now on we study Wick's formula, that is, a formula for correlation functions of Brownian motion, or more generally, creation and annihilation operators. Combinatorics of set partitions is needed to describe that formula. 

Let $[n]$ be the set $\{1,\dots,n\}$. 
A \emph{pair} (or a pair block) $V$ of a set partition is a block with cardinality 2 and a \emph{singleton} of a set partition is a block with cardinality 1. The set of partitions of $[n]$ whose blocks have cardinality $1$ or $2$ is denoted by $\P_{1,2}(n)$. 
When $n$ is even, a set partition of $[n]$ is called a \emph{pair partition} if every block is a pair. The set of pair partitions of $[n]$ is denoted by $\P_2(n)$.

For subsets $A,B$ of $[n]$, we say that {\it $A$ is on the left of $B$} or {\it $B$ is on the right of $A$} if $\min A <  \min B$. We say that {\it $A$ is on the strict left of $B$} or {\it $B$ is on the strict right of $A$} if $\max A <  \min B$. 

For two subsets $A, B$ of $[n]$, we say that $A$ \emph{covers} $B$ if there are $i,j \in A$ such that $i <k <j$ for any $k\in B$. 

\begin{figure}[h]
\centering
\begin{minipage}{0.45\hsize}
\begin{center}
 \begin{tikzpicture}[scale = 1.2]
 \draw (0,0) -- (0,1) -- (2,1) -- (2,0); 
  \draw (1,0) -- (1,0.5) -- (3,0.5) -- (3,0); 
\end{tikzpicture}
\caption{$\{2,4\}$ is on the right of $\{1,3\}$, but not in the strict sense.}
\end{center}
\end{minipage}
\hspace{5mm}
\begin{minipage}{0.45\hsize}
\begin{center}
 \begin{tikzpicture}[scale = 1.2]
 \draw (0,0) -- (0,1) -- (1,1) -- (1,0); 
  \draw (2,0) -- (2,1) -- (3,1) -- (3,0); 
\end{tikzpicture}
\caption{$\{3,4\}$ is on the strict right of $\{1,2\}$.}
\end{center}
\end{minipage}

\end{figure}

We introduce several statistics of set partitions $\pi$.  
Let $\Pair(\pi)$ be the set of pair blocks and $\Sing(\pi)$ be the set of singletons of $\pi$. When writing pairs we sometimes simplify the notation into $\{i<j\} \in \Pair(\pi)$ instead of $\{i,j\} \in \Pair(\pi), i<j$. 

Let $\cover(V)$ be the number of blocks of $\pi$ which cover  $V$: 
$$
\cover(V)=\#\{W \in \pi  \mid  W \text{~covers~}V  \}. 
$$
Let $\cs(\pi)$ be the number of covered singletons, counting multiplicity of covers:    
$$
\cs(\pi)=\#\{(S,W) \in \Sing(\pi) \times \pi \mid W \text{~covers~}S  \}. 
$$
The number of singletons on the right of $V \in \pi$ is denoted by $\sr(V)$,  
$$
\sr(V)=\#\{S \in \Sing(\pi)\mid \text{$S$ is on the right of $V$}\}.  
$$
Similarly, the number of singletons on the strict right of $V \in \pi$ is denoted by $\ssr(V)$,  
$$
\ssr(V)=\#\{S \in \Sing(\pi)\mid \text{$S$ is on the strict right of $V$}\}. 
$$
The number of left crossings of $V \in \pi$ is defined by 
$$
\lcr(V)=\#\{W \in \pi \mid \exists i,j \in V, \exists k,l\in W, k<i<l<j\}. 
$$
Let $\Cr(\pi)$ be the number of all crossings of $\pi$ defined by 
\begin{equation*}
\begin{split}
\Cr(\pi) = \sum_{V\in \pi} \lcr(V).  
\end{split}
\end{equation*}

We then define connected components, in particular {\it outer connected components} of a partition $\pi$ of $[n]$.  Given two blocks $V,W$ of a partition we write $V \simcr W$ when $V$ and $W$ cross.  Then we write $V \sim W$ when $V=W$ or there exist blocks $V_0=V, V_1, \dots, V_{k-1},V_k=W$ of $\pi$ such that $V_i \simcr V_{i+1}$ for all $i=0,1,\dots, k-1$. 
The equivalence relation $\sim$  splits the partition $\pi$, regarded as a set, into equivalence classes $\pi_1, \dots, \pi_m$. 
 Then $C_k:= \bigcup_{V\in \pi_k} V$ $(k=1,\dots, m)$ is called a {\it connected component} of $\pi$. 
By definition, two blocks which cross each other are contained in the same connected component. 
For example, the partition $\pi=\{\{1, 3, 7\}, \{2, 8\}, \{4, 5, 6\}\}$ has two connected components $\{1, 2, 3, 7, 8\}$ and $\{4, 5, 6\}$. 

Given two connected components, either one is on the strict right of the other, or one covers the other.  
A connected component is said to be {\it outer} if it is not covered by any other connected components. 
In the partition $\pi = \{\{1, 3, 7\}, \{2, 8\}, \{4, 5, 6\}\}$ the connected component $\{1, 2, 3, 7, 8\}$ is outer. 

We want to exclude singletons: the number of outer connected components which are not singletons is denoted by $\out(\pi)$. Moreover, $\outsr(\pi)$ denotes the number of connected components which are not singletons and which do not have singletons of $\pi$ on the right. In the partition $\pi = \{\{1, 3, 6\}, \{2, 5\}, \{4\}, \{7,8\}\}$ the outer connected component $\{1, 2, 3, 5, 6\}$ has a singleton $\{4\}$ on the right, so it is not counted into $\outsr(\pi)$. The other outer connected component $\{7,8\}$ is counted, so $\outsr(\pi)=1$.

\subsection{Set partitions of type D and colored partition statistics}\label{sec3.3}

A pair $(\pi,f)$ is called a \emph{colored set partition} if $\pi$ is a set partition and $f:\pi\to \{\pm1\}$ is a map, which means a coloring of the blocks of $\pi$ \cite{BEH15}. A block colored by $-1$ is called a {\it negative block} and a block colored by $1$ is called a {\it positive block}. 

Sometimes we want to think of partitions which are partially colored. When at least the pairs of $\pi$ are colored, namely a map $f\colon \Pair(\pi)\to\{\pm1\}$ is given, we denote the set of negative pairs by $\NPair(\pi,f)$,   
$$
\NPair(\pi,f)=\{W \in \Pair(\pi): f(W)=-1\}, 
$$
and by $\np(\pi,f)$ its cardinality, 
$$
\np(\pi,f) = \# \NPair(\pi,f). 
$$
 For simplicity we often drop the dependency on the coloring $f$ from the notation, like $\np(\pi)$.

The number of negative pairs on the right of $V \in \pi$ is denoted by $\npr(V)$: 
$$
\npr(V)=\#\{W \in \NPair(\pi): \text{$W$ is on the right of $V$}\}. 
$$
Let $\cnp(\pi,f)$ be the number of covered negative pairs counting multiplicity of covers: 
$$
\cnp(\pi,f)=\#\{(V,W) \in \NPair(\pi) \times \Pair(\pi) \mid W \text{~covers~}V  \}. 
$$
Let $\npssr(\pi,f)$ be the number of pairs
formed by a negative pair and a singleton on the strict right:  
\begin{align*}
\npssr(\pi,f)&=\#\{(V,S) \in \NPair(\pi) \times \Sing(\pi) \mid  \text{$S$ is on the strict right of $V$} \}. 
\end{align*}

\begin{definition} Suppose that $\pi \in \P_{1,2}(n)$ and $f$ is a coloring of $\pi$. A colored set partition $(\pi,f)$ is called a {\it partition of type D} if $f$ satisfies the following conditions \eqref{color1} and \eqref{color2}. 
\begin{enumerate}[(A)] 
\item\label{color1} Coloring of pair blocks $V\in\Pair(\pi)$ should start from the rightmost pair (= the pair having the largest left leg) and then go to left ones. 

\begin{enumerate}[\rm(1)]

\item\label{b} If $\sr(V)=\lcr(V)=\cover(V)=0$, then $V$ must be colored by $(-1)^{\npr(V)}$. For example take $\pi=\{\{1,3\},\{2,4\}\}$. If the block $\{2,4\}$ has color $-1$, then $V=\{1,3\}$ must be colored by $-1$. If the block $V=\{2,4\}$ has color 1 then the block $\{1,3\}$ must be colored by $1$. 

\item\label{a} Otherwise, $V$ can be colored by any of $-1$ and $1$. For example,  for $\pi=\{\{1,3\},\{2\}\}$ the block $\{1,3\}$ has a singleton $\{2\}$ on the right and so it can be colored by any of $-1$ and $1$.

\end{enumerate}
\item\label{color2} After coloring all pairs, we assign a unique color to each singleton $S$ as follows. 
\begin{enumerate}[\rm(1)]
\item\label{c} If $S$ is the rightmost singleton, then $S$ must be colored by $(-1)^{\np(\pi)}$. For example 
take $\pi=\{\{1,3\},\{2\}\}$. If $\{1,3\}$ is colored by $-1$ then $S=\{2\}$ must be colored by $-1$. If $\{1,3\}$ is colored by $1$ then $S=\{2\}$ must be colored by $1$. 

\item\label{d} Otherwise $S$ must be colored by $1$.

\end{enumerate}
\end{enumerate}
\end{definition}

We denote by $\PD_{1,2}(n)$ the set of all partitions of type D in $\P_{1,2}(n)$. We call $(\pi,f) \in \PD_{1,2}(n)$ a \emph{pair partition of $[n]$ of type D} if $\pi$ is a pair partition. The set of pair partitions of $[n]$ of type D is denoted by $\PD_2(n)$. Below we need colored partitions of a finite totally ordered set $T$, denoted e.g.\ by $\PD_{1,2}(T)$, which is defined naturally from the unique isomorphism $T\simeq [n]$ for some $n$. 

\begin{remark}\label{TD}
\begin{enumerate}[\rm(1)]

\item The set of type B partitions $\P_{1,2}^B(n)$ consists of colored partitions $(\pi,f)$ such that $\pi \in \P_{1,2}(n)$ and the singletons of $\pi$ must be colored by 1 (see \cite{BEH15}). In our definition, $\PD_{1,2}(n)$ is not a subset of $\P_{1,2}^B(n)$ since a partition of type D may have a negative singleton.  

\item About pair partitions, $\PD_{2}(n)$ is a subset of $\P^B_{2}(n)$ which is now the set of all colored pair partitions. 
The relationship between them can be written as follows. Take $\pi \in \P_2(n)$. Let $C_1,\dots,C_k$ be the outer connected components of $\pi$ and let 
\begin{align}
\pi_i=\{V\in \pi\mid\min(C_i)\leq \min(V)<\max(V)\leq  \max(C_i)\},
\label{eqatouterconnectedcomponents}
 \end{align} 
 which is the set of pairs contained in or covered by the outer connected component $C_i$ of $\pi$. Then $\pi=\bigcup_{i\in [k]}\pi_i$. The leftmost block $L_i$ of $\pi_i$ satisfies $\lcr(L_i)=\cover(L_i)=0$, and the other blocks $V$ of $\pi_i$ satisfy $\lcr(V)\neq0$ or $\cover(V)\neq0$. Hence, when we construct a coloring of $\pi$ of type D, we can choose arbitrary colors $\pm1$ of the blocks in $\pi_i\setminus\{L_i\}$ for each $i$, and we must choose a unique color of $L_i$. The definition \eqref{b} of type D colorings says that this unique color of $L$ is determined so that the number of negative blocks of $\pi_i$ becomes even (cf.\ Proposition \ref{property}\eqref{property1}). This argument gives, with the notation \eqref{eqatouterconnectedcomponents}, 
a characterization of pair partitions of type D: 
\begin{equation}\label{DefD}
\PD_{2}(n)=\{ (\pi,f) \in\P^B_2(n) \mid \textrm{$\np(\pi_i,f|_{\pi_i})$ is even for every $i$} \}. 
\end{equation}

\item In some sense, the above observation is also compatible with the relation between the Coxeter groups of type B and D. The Coxeter group of type B can be written as $B(n)=\Z_2^n \rtimes S(n) $ and hence it can be defined as all signed permutations of the numbers $\pm1,\dots,\pm n.$  The group $D(n)$ is a subgroup of $B(n)$ consisting of all signed permutations having an even number of negative entries in their window notation. Thus we can assign arbitrary signs $\pm1$  to the points $2,\dots,n$ and then we must assign a unique sign to the first element 1 depending on the signs of the other points $2,\dots,n.$ 
The construction of colorings of type D on each $\pi_i$ in \eqref{DefD} is exactly the above construction of $D(n) \subset B(n)$.  
\end{enumerate}
\end{remark}

\begin{example} 
Let $\pi=\{\{1\},\{2,5\},\{3\},\{4,6\}\} \in \P_{1,2}(6)$. Then the rightmost pair is $\{4,6\}$, which has the left crossing $\{2,5\}$, so it can be painted by any of $\pm1$. Then the next rightmost pair is $\{2,5\}$ which has the singleton $\{3\}$ on the right, so it can also be painted by $\pm1$. So there are four possible colorings of $\pi$. The singleton $\{1\}$ must be painted by $1$, and $\{3\}$ must be painted by either $1$ or $-1$ depending on $\np(\pi)$. 
\end{example}

\begin{example}\label{PD4} There are five pair partitions of $\{1, 2,3,4\}$ of type D. For example if $\pi=\{\{1,4\},\{2,3\}\}$, then in our terminology $V=\{2,3\}$ is on the right of $W=\{1,4\}$ and $\cover(V)=1$. According to \eqref{a}, $V$ can be painted by any of $1$ and $-1$. If $V$ is positive then \eqref{b} says that $W$ must be colored by $1$, and if $V$ is negative then $W$ must be colored by $-1$.

\setlength{\unitlength}{0.5cm}
\begin{figure}[h]
\begin{center}
\psset{nodesep=3pt}
\psset{angle=90}
\rnode{A}{\Node{1}} \rnode{B}{\Node{2}} \rnode{C}{\Node{3}} \rnode{D}{\Node{4}}  
\ncbar[armA=.4]{A}{B}
\put(-3,1.8){1}
\ncbar[armA=.4]{C}{D}
\put(-1.2,1.8){1}
\hspace{3mm}  
\rnode{A}{\Node{1}} \rnode{B}{\Node{2}}  \rnode{C}{\Node{3}}
\rnode{D}{\Node{4}}  
\ncbar[armA=.4]{A}{C}
\put(-3,1.8){1}
\ncbar[armA=.6]{B}{D}
\put(-1.5,2.2){1}
\hspace{3mm}  
\rnode{A}{\Node{1}} \rnode{B}{\Node{2}}  \rnode{C}{\Node{3}}
\rnode{D}{\Node{4}}   
\ncbar[armA=.4]{A}{C}
\put(-3.2,1.8){-1}
\ncbar[armA=.6]{B}{D}
\put(-1.5,2.2){-1}
\hspace{3mm}  
\rnode{A}{\Node{1}} \rnode{B}{\Node{2}}  \rnode{C}{\Node{3}}
\rnode{D}{\Node{4}}  
\ncbar[armA=1]{A}{D}
\put(-2.1,1.8){1}
\ncbar[armA=.4]{B}{C}
\put(-2.1,3){1} 
\hspace{3mm}  
\rnode{A}{\Node{1}} \rnode{B}{\Node{2}}  \rnode{C}{\Node{3}}
\rnode{D}{\Node{4}}  
\ncbar[armA=1]{A}{D}
\put(-2.4,1.8){-1}
\ncbar[armA=.4]{B}{C}
\put(-2.4,3){-1}
\end{center}
\vspace{-4mm}
\caption{$\PD_{2}(4)$.} 
\end{figure}
\end{example}

The colors of blocks of type D partitions must satisfy some properties. 

\begin{proposition} \label{property}
Suppose that $\pi \in \PD_{1,2}(n)$. 
\begin{enumerate}[\rm(1)]
\item\label{property1} If $\pi$ is a pair partition of type D, then $\np(\pi)$ is even (maybe zero). 
\item\label{property2} If $\pi$ contains the singleton $\{1\}$, then $\{1\}$ has color $1$.
\end{enumerate}
\end{proposition}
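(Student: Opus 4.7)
The plan is to apply the forced-color rule \eqref{color1}\eqref{b} at the leftmost pair of $\pi$; both parts will then follow from a one-line parity computation. The key preliminary observation is that the leftmost pair $V$ of any set partition automatically satisfies $\lcr(V)=\cover(V)=0$, since both statistics only count blocks having a left leg strictly to the left of $\min V$, and singletons do not contribute to either statistic (both require a block with two legs straddling $V$).

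For (1), assume $\pi\in\PD_2(n)$. The case $n=0$ is vacuous, so take $n\geq 2$ and let $V$ be the pair of $\pi$ with smallest left leg. Since $\pi$ contains no singletons, $\sr(V)=0$, and by the observation above $\lcr(V)=\cover(V)=0$, so rule \eqref{color1}\eqref{b} forces $V$ to have color $(-1)^{\npr(V)}$. Because $V$ is leftmost, the pairs on its right are exactly $\pi\setminus\{V\}$, so $\np(\pi)=\npr(V)+\epsilon$, where $\epsilon\in\{0,1\}$ records whether $V$ is negative. The forced-color identity reads $\epsilon\equiv\npr(V)\pmod 2$, hence $\np(\pi)\equiv 2\npr(V)\equiv 0\pmod 2$.

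For (2), there are two cases. If $\pi$ contains a singleton besides $\{1\}$, then $\{1\}$ is not the rightmost singleton, and rule \eqref{color2}\eqref{d} directly assigns it color $1$. Otherwise $\{1\}$ is the unique singleton, rule \eqref{color2}\eqref{c} assigns it color $(-1)^{\np(\pi)}$, and it suffices to show $\np(\pi)$ is even. The case $\pi=\{\{1\}\}$ is trivial; otherwise let $V$ be the leftmost pair of $\pi$. The only singleton $\{1\}$ lies on the left of $V$ rather than the right, so $\sr(V)=0$, while $\lcr(V)=\cover(V)=0$ again by the preliminary observation. Applying rule \eqref{color1}\eqref{b} exactly as in (1) yields $\np(\pi)\equiv 0\pmod 2$.

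There is no genuine obstacle here; the only point worth emphasizing up front is that singletons are invisible to $\lcr$ and $\cover$, which is precisely what permits the forced-color rule to fire at the leftmost pair even in case (2), where a singleton occupies position $1$.
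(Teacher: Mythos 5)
Your proof is correct and follows essentially the same route as the paper's: both rest on applying the forced-color rule \eqref{b} to the leftmost pair and reading off the parity of $\np(\pi)$ from the identity $f(V)=(-1)^{\npr(V)}$. The only cosmetic difference is in part (2), where the paper restricts to $\{2,\dots,n\}$ and invokes part (1), while you rerun the leftmost-pair argument on $\pi$ itself after observing that the singleton $\{1\}$ contributes nothing to $\sr$, $\lcr$ or $\cover$ of that pair.
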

\begin{proof} The proofs can be given by contradiction. 

\eqref{property1} This is a statement weaker than \eqref{DefD}, but we give an independent proof. Suppose that the number of negative pairs is odd and consider the leftmost pair $V=\{1,i\}$ for some $i\in[n]$. Then $\lcr(V)=0$, $\cover(V)=0$, and the other pairs are on the right of $V$. If $V$ has color $1$ then $\npr(V)=\np(\pi)$ is odd, which contradicts \eqref{b} in the definition of partitions of type D. Otherwise, $V$ has color $-1$, then $\npr(V)=\np(\pi)-1$ is even, so we again get contradiction with the definition of partitions of type D.

\eqref{property2} Suppose that $\{1\}$ is colored by $-1$. The condition \eqref{color2} shows that $\{1\}$ must be the rightmost singleton, and so it is the unique singleton of $\pi$. The restriction $\pi|_{\{2,\dots,n\}}$ is also a (pair) partition of type D since removing the singleton $\{1\}$ does not matter in the condition \eqref{color1} for pairs. Then, what we have proved in \eqref{property1} says that $\pi|_{\{2,\dots,n\}}$ has an even number of negative pairs, which contradicts the assumption and \eqref{c}. 
\end{proof}

Type D partitions behave nicely with respect to the restriction of partitions on $\{2,\dots,n\}$. In fact they can be characterized by some recursion. 
\begin{lemma}\label{induction}
Suppose that $(\pi,f) \in \PD_{1,2}(\{2,\dots, n\})$. We get new partitions $(\tilde{\pi},\tilde{f}) \in \PD_{1,2}(n)$ from one of the following procedures. 
\begin{enumerate}[\rm(1)]
\item\label{proc1} Add the singleton $\{1\}$ to $(\pi,f)$ and color it by $1$. 
\item\label{proc2} If $\pi$ has singletons, choose a singleton $\{i\}$ of $\pi$, create a pair $\{1, i\}$, color it 
 and, if necessary, recolor the rightmost singleton of $\pi \setminus\{i\}$ so that we obtain a partition of $[n]$ of type D. More precisely: 
 \begin{enumerate}[\rm(i)]
\item\label{proc21}  If $\pi$ has at least two singletons then $\{1,i\}$ can be painted by any of $\pm1$. After choosing a color of $\{1,i\}$, the rightmost singleton of $\pi\setminus\{i\}$ must be repainted by $(-1)^{\np(\tilde{\pi})}$;   

\item\label{proc22} If $\pi$ has the unique singleton $\{i\}$ then $\{1,i\}$ must be colored by $(-1)^{\np(\pi)}$. 
\end{enumerate}
\end{enumerate}
When $(\pi,f)$ runs over $\PD_{1,2}(\{2,\dots, n\})$, every partition in $\PD_{1,2}(n)$ appears exactly once in the above procedures. 
\end{lemma}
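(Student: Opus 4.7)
The plan is to prove the lemma by constructing an explicit inverse. Given $(\tilde\pi,\tilde f)\in\PD_{1,2}(n)$, consider the block $B\ni 1$. By Proposition~\ref{property}(\ref{property2}), if $B=\{1\}$ then $\tilde f(B)=1$, and one inverts procedure~(\ref{proc1}) by setting $\pi=\tilde\pi\setminus\{B\}$ with the inherited coloring. If $B=\{1,i\}$ for some $i$, one inverts procedure~(\ref{proc2}) by replacing $B$ by the singleton $\{i\}$, inheriting all remaining pair colors from $\tilde\pi$, and recoloring the singletons of $\pi$ by the type~D rule (rightmost gets $(-1)^{\np(\pi)}$, others get $1$); the sub-cases~(\ref{proc21}) and~(\ref{proc22}) are distinguished by whether $\tilde\pi$ has or lacks singletons.

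The key technical step is the following invariance, proved by a short case analysis on the position of $i$ relative to $\min V$ and $\max V$: for every pair $V$ common to $\pi$ and $\tilde\pi$,
\[
\sr_\pi(V)+\lcr_\pi(V)+\cover_\pi(V)=\sr_{\tilde\pi}(V)+\lcr_{\tilde\pi}(V)+\cover_{\tilde\pi}(V),\qquad \npr_\pi(V)=\npr_{\tilde\pi}(V).
\]
Indeed, if $i<\min V$ nothing changes; if $\min V<i<\max V$ a unit of $\sr$ is traded for a unit of $\lcr$; if $i>\max V$ the trade is between $\sr$ and $\cover$; and $\npr$ is untouched because $\{1,i\}$ always lies strictly left of $V$. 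It follows that condition~(\ref{color1}) transfers pair-by-pair between $\pi$ and $\tilde\pi$: the forced case~(\ref{b}) maps to the forced case with the same required sign, and the free case~(\ref{a}) maps to the free case. The new block $\{1,i\}$ satisfies $\lcr(\{1,i\})=\cover(\{1,i\})=0$ and $\sr(\{1,i\})=|\Sing(\pi)|-1$, so sub-case~(\ref{proc22}) falls into~(\ref{b}) and forces the color $(-1)^{\np(\pi)}$, while sub-case~(\ref{proc21}) gives $\sr(\{1,i\})\geq 1$ and allows any color. Procedure~(\ref{proc1}) is simpler: inserting $\{1\}$ at the far left affects no pair statistic, and assigning $\{1\}$ the color $1$ is consistent either because $\{1\}$ is non-rightmost (applying~(\ref{d})) or because $\pi$ has no singletons, in which case Proposition~\ref{property}(\ref{property1}) gives $\np(\pi)$ even and hence $(-1)^{\np(\tilde\pi)}=1$.

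For bijectivity, the block $B\ni 1$ and the value $\tilde f(B)$ uniquely determine the choice data (which procedure was used, the index $i$, and the color of $\{1,i\}$); the forced-color requirement in sub-case~(\ref{proc22}) is automatically consistent since $\np(\tilde\pi)$ is even by Proposition~\ref{property}(\ref{property1}). That the inverse map actually lands in $\PD_{1,2}(\{2,\dots,n\})$ follows from the same invariance identity applied in reverse, together with the type~D rule for assigning singleton colors of $\pi$. The singleton bookkeeping is then straightforward: the explicit repainting prescribed in~(\ref{proc21}) correctly relates the rightmost singleton of $\tilde\pi$ (colored $(-1)^{\np(\tilde\pi)}$) to whichever singleton of $\pi$ ends up being rightmost, irrespective of whether $\{i\}$ itself was the rightmost in $\pi$. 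The main obstacle is to set up the $\sr+\lcr+\cover$ and $\npr$ invariance cleanly; once it is in place, the transfer of type~D structure becomes mechanical, and the parity tracking for singletons is only a matter of recording whether the chosen color of $\{1,i\}$ is $+1$ or $-1$.
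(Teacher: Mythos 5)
Your proof is correct and follows essentially the same route as the paper's: both verify that the two procedures yield valid type~D colorings by analyzing how the new block $\{1,i\}$ interacts with pairs lying to the left of $i$ versus those on its strict right, and both obtain bijectivity by exhibiting restriction to $\{2,\dots,n\}$ (with the forced recoloring of the rightmost singleton) as the inverse. Your explicit invariance of $\sr+\lcr+\cover$ and of $\npr$ under the exchange of the singleton $\{i\}$ for the pair $\{1,i\}$ is a cleaner packaging of the same case analysis the paper carries out informally.
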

\begin{proof} We first prove that a resulting partition $(\tilde{\pi}, \tilde{f})$ is indeed a partition of type D. 
In the procedure \eqref{proc1}, since the leftmost singleton does not matter in coloring the other pairs and singletons, so $(\tilde{\pi}, \tilde{f})$ is a partition of type D. 
In the procedure \eqref{proc2}, suppose that $\pi$ has a singleton at $i ~(\geq2)$, pairs $V_1,\dots, V_t$ on the left of $i$ and pairs $W_1,\dots, W_u$ on the strict right of $i$. Suppose that we take $i$ and create a pair $\{1,i\}$. The new pair $\{1,i\}$ is a left crossing or a covering of $V_1,\dots,V_t$, but not of $W_1,\dots, W_u$. Hence the old colors of $W_1,\dots, W_u$ given in $(\pi,f)$ satisfy the conditions \eqref{a},\eqref{b} in the new partition $(\tilde{\pi}, \tilde{f})$ too.  For $V_1,\dots,V_t$, they have the left crossing or covering $\{1,i\}$ and then according to \eqref{a} any color is allowed, so the old colors are valid. Thus the old colors of all pairs $V_1,\dots, V_t, W_1,\dots, W_u$ satisfy the required conditions \eqref{color1} on blocks of the type D partition $(\tilde{\pi}, \tilde{f})$. 

For singletons, the color of the rightmost singleton of $\tilde{\pi}$ is uniquely determined according to \eqref{d}. The other singletons of $\tilde{\pi}$ may keep the old colors unchanged. The above discussions show that $(\tilde{\pi}, \tilde{f})$ is a partition of type D. 

One can check that all the partitions $(\tilde{\pi}, \tilde{f})$ appearing in the above are distinct. Thus we only need to show that all partitions in $\PD_{1,2}(n)$ appear. Given a partition $(\sigma,g) \in \PD_{1,2}(n)$, the first case is that $1$ is a singleton of $\sigma$. Note then that $\{1\}$ has color $1$ by Proposition \ref{property}. In this case the restriction $(\pi,f):=(\sigma\setminus\{1\},g|_\pi)$ is a partition of $\{2,\dots, n\}$ of type D, and then after procedure \eqref{proc1} we get $(\tilde{\pi},\tilde{f})=(\sigma,g)$.  
The second case is that $\{1,i\}$ is a pair in $\sigma$ for some point $i$. The block $\{1,i\}$ may cover or cross (from the left) other pairs, say $V_1,\dots, V_t$. Let $\pi$ be the restriction of $\sigma$ to $\{2,\dots,n\}$, namely only $\{1,i\}$ is replaced by $\{i\}$. Then $V_1,\dots, V_t \in \pi$ have the singleton $i$ on the right, so the original colors of them satisfy the conditions for type D partitions of $\{2,\dots, n\}$. Keep the original colors of the other pairs too, and (re)color the rightmost singleton (if any) uniquely to get a partition $(\pi,f)$ of type D. Now we can revert this procedure from $(\pi,f)$ to $(\tilde{\pi},\tilde{f}) = (\sigma,g)$. 
\end{proof}

\subsection{Wick's formula of type D}\label{sec3.4}

Given $\epsilon=(\epsilon(1), \dots, \epsilon(n))\in\{1,\ast\}^n$, a set partition $\pi\in \P_{1,2}(n)$, written in the form
\begin{align*}
&\pi =\{\{i_1<j_1\}, \dots, \{i_k<j_k\}, \{s_1\}, \dots, \{s_m\}\}, \quad k,m \in \N \cup\{0\}, 
\end{align*}
is said to be {\it $\e$-compatible} if $\epsilon(i_p)=1 $ and $\epsilon(j_p)=\ast$ for all $1 \leq p \leq k$ and $\epsilon(s_p)=\ast$ for all $1 \leq p \leq m$. The set of $\e$-compatible partitions in $\P_{1,2}(n)$ is denoted by $\P_{1,2;\e}(n)$. We also let $\P_{2;\epsilon}(n):=\P_{1,2;\epsilon}(n)\cap \P_2(n)$. Let $\PD_{1,2;\epsilon}(n)$ be the set of type D partitions $(\pi,f)\in\PD_{1,2}(n)$ such that $\pi$ is $\e$-compatible, and similarly $\PD_{2;\epsilon}(n)$ be defined.  

Note that Lemma \ref{induction} can be extended to $\PD_{1,2;\e}(n)$. More precisely, given $\e=(\e(2), \dots, \e(n)) \in \{1,\ast\}^{n-1}$ and $(\pi,f) \in \PD_{1,2;\e|_{\{2,\dots,n\}}}(\{2,\dots,n\})$, the procedure \eqref{proc1} in Lemma \ref{induction} yields a colored partition $(\tilde{\pi},\tilde{f}) \in \PD_{1,2;(\ast,\e)}(n)$, and every colored partition of $\PD_{1,2;(\ast,\e)}(n)$ appears in this way. Similarly, the procedure \eqref{proc2} yields a colored partition in $\PD_{1,2;(1,\e)}(n)$ and every colored partition in $\PD_{1,2;(1,\e)}(n)$ appears in this way.

We establish a vector-version of Wick's formula.

\begin{theorem}\label{main theorem}   
For any $x_1,\dots,x_n \in H$ and any $\epsilon=(\epsilon(1), \dots, \epsilon(n))\in\{1,\ast\}^n$, we have
\begin{equation*}
\begin{split}
&\D^{\epsilon(1)}(x_{1})\cdots \D^{\epsilon(n)}(x_n)\Omega \\
&\qquad= \sum_{(\pi,f)\in\PD_{1,2;\epsilon}(n)}  q^{\Cr(\pi)+ \cs(\pi)+2 \cnp(\pi,f) + 2\npssr(\pi,f)} \\
&\qquad\quad \times \prod_{\substack{\{i<j\} \in \Pair(\pi)\\ f(\{i<j\})=1 } }\langle x_i, x_j\rangle \prod_{\substack{\{i<j\} \in \Pair(\pi)\\ f(\{i<j\})= -1}} \langle x_i, \overline{x}_j\rangle 
\bigotimes_{i\in \Sing(\pi,f)} x_i, 
\end{split}
\end{equation*} 
where $\bigotimes_{i\in \Sing(\pi,f)} x_i$ denotes the tensor product $x_{v_1}\otimes \cdots \otimes x_{v_m}$ if $\Sing(\pi,f)=\{v_1 < \cdots < v_m\}$ does not have a negative singleton, and $x_{v_1}\otimes \cdots \otimes \overline{x}_{v_m}$ if the rightmost singleton $v_m$ is negative. In the above
formula, we use the convention  $\bigotimes_{i\in \emptyset}x_i=\Omega.$ 
\end{theorem}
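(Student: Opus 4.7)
The plan is to argue by induction on $n$, using the combinatorial recursion of Lemma \ref{induction} as the backbone and the operator decomposition $\D(x) = \la_q(x) + J\ra_q(\overline{x})q^{N-1}$ from Theorem \ref{thm1} as the driving analytic identity. The base case $n=0$ is immediate: both sides equal $\Omega$ (the empty partition contributes $\bigotimes_{i\in\emptyset} x_i = \Omega$ by convention). For the induction step I separate the cases $\epsilon(1)=\ast$ and $\epsilon(1)=1$, which will correspond precisely to procedures (1) and (2) of Lemma \ref{induction}.

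If $\epsilon(1) = \ast$, then $\D^{\ast}(x_1) = \la^\ast(x_1)$ simply prepends $x_1$ to each tensor appearing in the induction hypothesis for $\D^{\epsilon(2)}(x_2)\cdots\D^{\epsilon(n)}(x_n)\Omega$. Identifying each $(\pi,f)\in \PD_{1,2;\epsilon|_{\{2,\dots,n\}}}(\{2,\dots,n\})$ with $(\widetilde\pi,\widetilde f) \in \PD_{1,2;\epsilon}(n)$ obtained by adjoining the positively colored singleton $\{1\}$, one verifies directly that $\Cr,\cs,\cnp$ and $\npssr$ are all unchanged: the new singleton sits at the leftmost position, so it is never covered, never lies on the strict right of any pair, and contributes no new crossings or negative pairs. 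This is consistent with the absence of any $q$-power in $\la^\ast(x_1)$, and by Lemma \ref{induction} each such $(\widetilde\pi,\widetilde f)$ arises exactly once in this way.

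If $\epsilon(1) = 1$, I substitute the induction hypothesis into $\D(x_1) = \la_q(x_1) + J\ra_q(\overline{x}_1)q^{N-1}$. For a summand indexed by $(\pi,f) \in \PD_{1,2;\epsilon|_{\{2,\dots,n\}}}(\{2,\dots,n\})$ with singletons at $v_1<\cdots<v_m$ and tensor $T=t_1\otimes\cdots\otimes t_m$ (with $t_m=\overline{x}_{v_m}$ iff the rightmost singleton of $\pi$ is negative, and $t_k=x_{v_k}$ otherwise), the two operators together produce $2m$ terms, one for each pairing of $1$ with some $v_j$ together with a choice of color. The key claim is that these terms are in bijection with the colored partitions in $\PD_{1,2;(1,\epsilon|_{\{2,\dots,n\}})}(n)$ produced by procedure (2) of Lemma \ref{induction}: $\la_q(x_1)$ gives the color of $\{1,v_j\}$ matching the sign of $t_j$ (positive if $t_j=x_{v_j}$, negative if $t_j=\overline{x}_{v_j}$), while $J\ra_q(\overline{x}_1)q^{N-1}$ gives the opposite color and the operator $J$ applies an overline to the new rightmost tensor factor, precisely implementing the recoloring of the rightmost singleton demanded by procedure (2)(i); in the boundary case $m=1$ the forced sign of $\{1,v_1\}$ (procedure (2)(ii)) agrees with the sign of $t_1$ dictated by part (1) of Proposition \ref{property} applied to $\pi$.

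The heart of the computation is matching the $q$-exponents. Writing $j=k$ for a term produced by $\la_q(x_1)$ and $j=m-k+1$ for one produced by $J\ra_q(\overline{x}_1)q^{N-1}$, a direct bookkeeping yields $\Delta \Cr = |\{W\in\Pair(\pi) : \min W<v_j<\max W\}|$ and $\Delta \cs = -|\{W\in\Pair(\pi): \min W<v_j<\max W\}|+(j-1)$, since the old covers of $v_j$ as a singleton transfer into crossings with $\{1,v_j\}$ and the $j-1$ singletons $v_1,\dots,v_{j-1}$ become newly covered by $\{1,v_j\}$; meanwhile the changes in $\cnp$ and $\npssr$ coming from old negative pairs with both legs less than $v_j$ cancel, and in the negative-pair case there is an additional $2(m-j)$ contribution from the singletons $v_{j+1},\dots,v_m$ lying on the strict right of the new negative pair. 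Hence $\Delta(\Cr+\cs+2\cnp+2\npssr)$ equals $j-1$ for the $\la_q$ term and $2m-j-1 = (k-1)+(m-1)$ for the $J\ra_q q^{N-1}$ term, matching exactly the $q$-exponents $q^{k-1}$ and $q^{k-1}q^{m-1}$ supplied by the two operators. The main obstacle is not a single conceptual step but the careful case analysis required to verify, for each of the four combinations (sign of the new pair $\times$ parity of the rightmost singleton of $T$), that the tensor emerging from $\D(x_1)T$ coincides with the prescribed tensor $\bigotimes_{i\in \Sing(\widetilde\pi,\widetilde f)}x_i$ and that the induced color of $\{1,v_j\}$ together with the color of the new rightmost singleton satisfies the type D coloring rules; this is where the interplay between the involution $J$, the overlines in $T$, and the rules of Lemma \ref{induction} must be tracked with care.
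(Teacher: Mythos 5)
Your proposal is correct and follows essentially the same route as the paper's proof: induction on $n$ driven by Lemma \ref{induction}, with $\epsilon(1)=\ast$ handled by prepending a positive singleton and $\epsilon(1)=1$ handled via the decomposition $\D(x_1)=\la_q(x_1)+J\ra_q(\overline{x}_1)q^{N-1}$ of Theorem \ref{thm1}, and the same bookkeeping $\Delta\Cr=u$, $\Delta\cs=-u+(j-1)$, $\Delta\cnp=t$, $\Delta\npssr=-t$ or $-t+r$. Your formulation is in fact slightly more streamlined than the paper's, which splits Case II into several subcases according to the parity of $\np(\pi)$ and whether $r=0$, whereas you unify these by tying the color of the new pair to the sign of the tensor factor $t_j$ and noting that $J$ implements the recoloring of the new rightmost singleton; the boundary case of a unique singleton (where $J\ra_q$ annihilates the one-tensor and only the forced color survives) is acknowledged and consistent with Proposition \ref{property}.
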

\begin{remark}
\begin{enumerate}[\rm(1)]
\item Usually people working on Fock spaces take $x_1,\dots, x_n$ from a real Hilbert subspace of $H$, and then the order $i<j$ does not matter since $\langle x_i, x_j\rangle = \langle x_j,x_i\rangle$. Our version is more general in this sense. 
\item If $\#\{i\in \{j,j+1,\dots,n\} \mid \epsilon(i)=1\}>\#\{i\in\{j,j+1,\dots,n\} \mid \epsilon(i)=\ast\}$ for some $j\in[n]$, then $\D^{\epsilon(1)}(x_{1})\cdots \D^{\epsilon(n)}(x_n)\Omega=0$. This case is also covered by Theorem \ref{main theorem} if we understand the sum over the empty set is 0 since $\PD_{1,2;\epsilon}(n)=\emptyset$ in this case. 
\end{enumerate}
\end{remark}

\begin{proof} 
The proof is given by induction and is based on Lemma \ref{induction}. When $n=1$,  $\D(x_1)\Omega=0$ and $\D^\ast(x_1)\Omega=x_1$ and hence the formula is true. Suppose that the formula is true for $n-1$. Then for any $(\epsilon(2),\cdots,\epsilon(n))\in\{1,\ast\}^{n-1}$, we get  
\begin{equation}
\begin{split}
&\D^{\epsilon(2)}(x_{2})\cdots \D^{\epsilon(n)}(x_n)\Omega \\
&\qquad= \sum_{(\pi,f)\in\PD_{1,2;\epsilon}(\{2,\dots,n\})}  q^{\Cr(\pi)+ \cs(\pi)+2 \cnp(\pi,f) + 2\npssr(\pi,f)} \\
&\qquad\quad \times \prod_{\substack{\{i<j\} \in \Pair(\pi)\\ f(\{i<j\})=1 } }\langle x_i, x_j\rangle \prod_{\substack{\{i<j\} \in \Pair(\pi)\\ f(\{i<j\})= -1}} \langle x_i, \overline{x}_j\rangle 
\bigotimes_{k\in \Sing(\pi,f)} x_k, 
\end{split} \label{formulaindution}
\end{equation}
We apply the operator $\D^{\epsilon(1)}(x_{1})$, which equals $\D^\ast(x_{1})$ if $\epsilon(1)=\ast$ and $\la_q(x_1)+ J \ra_{q}(\overline{x}_1)q^{N-1}$ if $\epsilon(1)=1$ from Theorem \ref{thm1}.

Case I: $\epsilon(1)=\ast$. The operator $\D^\ast(x_{1})$ creates a tensor component $x_{1}$ on the left. In terms of partitions, this corresponds to procedure \eqref{proc1} in Lemma \ref{induction}: to add the singleton $\{1\}$ (with color 1) to $(\pi,f)\in\PD_{1,2;\epsilon}(\{2,\dots,n\})$, to yield the new type D partition $(\tilde{\pi},\tilde{f})\in\PD_{1,2;\epsilon}(n)$. This map $(\pi,f)\mapsto (\tilde{\pi},\tilde{f})$ does not change the numbers $ \Cr, \cnp, \npssr$ or $\cs$, which is compatible with the fact that the action of $\D^\ast(x_{1})$ does not change the coefficient. 
Note that if $\epsilon(1)=\ast$, then any partition in $\PD_{1,2;\epsilon}(n)$ has the singleton $\{1\}$. Hence Theorem \ref{main theorem} holds for  $n$ and $\epsilon(1)=\ast$.

Case II: $\epsilon(1)=1$. Fix $(\pi,f)\in \PD_{1,2;\epsilon}(\{2,\cdots,n\})$ and suppose that $\pi$ has singletons  $k_1<\cdots <k_p <i < m_1 <\cdots <m_r$, negative pair blocks $V_1,\dots, V_t$ on the strict left of $i$ and pair blocks $W_1,\dots, W_u$ which cover $i$. There may be pair blocks on the strict right of $i$ or positive pair blocks on the strict left of $i$, but they do not matter. Then we discuss three cases separately: (i) $(p,r)\neq (0,0)$, (ii) $(p,r)= (0,0)$ and (iii) $\pi$ does not have a singleton.

\vspace{5mm}

Case II(i): $(p,r)\neq (0,0)$. In this situation we have at least one singleton other than the singleton $i$. 
In equation \eqref{formulaindution} we have two situations  
$$
\bigotimes_{k\in \Sing(\pi,f)} x_k =x_{k_1}\otimes \cdots \otimes x_{m_r} \textrm{ ~~~and ~~~} x_{k_1}\otimes \cdots \otimes \overline{x}_{m_r},
$$
and discuss these two cases separately.

Case II(i)1: Suppose that $\np(\pi)$ is even,  or equivalently, all singletons are colored by $1$ (see \eqref{color2} in the definition of partitions of type D). In this case 
$$
\bigotimes_{k\in \Sing(\pi,f)} x_k =x_{k_1}\otimes \cdots \otimes x_{k_p}\otimes x_{i}\otimes x_{m_1}\otimes \cdots\otimes x_{m_r}.  
$$
We discuss the left and right annihilation operators separately. 

Case II(i)1(a): The left action. The $q$-derivative $\la_q(x_{1})$ creates new $p+r+1$ terms.  
In the $i^{\rm th}$ term the inner product $\langle x_{1},x_i\rangle$ appears with coefficient $q^p$. In terms of partitions this corresponds to a case of Lemma \ref{induction}\eqref{proc21}: to get a set partition $(\tilde{\pi}, \tilde{f}) \in \PD_{1,2;\epsilon}(n)$ by adding the pair $\{1, i\}$ with color $1$ to $\pi\setminus\{i\}$. This pair crosses the blocks $W_1, \dots, W_u$ and so increases the crossing number by $u$ but decreases the number of covered singletons by $u$. The new covered singletons $\{k_1\}, \dots, \{k_p\}$ and new inner negative blocks $V_1,\dots, V_t$ appear. Because $i$ is not a singleton in $(\tilde{\pi}, \tilde{f})$, the number of singletons on the strict right of negative blocks decreases by $t$. Altogether we have: 
$\Cr(\tilde{\pi})=\Cr(\pi)+u$, $\cs(\tilde{\pi})=\cs(\pi)-u+p$, $\cnp(\tilde{\pi},\tilde{f})=\cnp(\pi,f)+t$ and $\npssr(\tilde{\pi}, \tilde{f})= \npssr(\pi,f)-t$. So the exponent of $q$ increases by $p$. This factor $q^p$ is exactly the factor appearing in the $q$-derivative formula \eqref{rq} when $\la_q(x_{1})$ acts on $x_i$, see Figure \ref{fig:FiguraExemple1}. 
\begin{figure}[h]
\begin{center}
  \begin{tikzpicture}[thick,font=\small]
    \path 
             (0,0) node[circle,,fill=blue!20,draw] (a) {-}
          (1,0) node[circle,draw] (b) {-}
          (2,0) node[circle,draw] (c) {-}
           (0,-0.7) node[] (c1) {$1$-th}
          (3,0) node[circle,draw] (d) {-}
          (8.4,2) node (fdf) {$\{1, i\}$ colored by  1}
          (4,0) node[circle,draw] (d1) {$\ast$}
          (4,-0.7) node[] (e3) {$k_1$-th}
          (5.1,-0.1) node[circle] (g) {$\dots$}
           (6,0) node[circle,draw] (g1) {$\ast$}
          (7,0) node[circle,draw] (g2) {$\ast$}
          (6,-0.7) node[] (e3) {$k_p$-th}
          (8,0) node[circle,fill=blue!20,draw] (h) {$\ast$}
         (8,-0.7) node[] (h2) {$i$-th}
         (9,0) node[circle,draw] (h1) {$\ast$}
          (9,-0.7) node[] (e3) {$m_1$-th}
            (11,1) node[] (e3) {$1$}
         (11,0) node[circle,draw] (e) {$\ast$}
          (10.1,-0.1) node[circle] (g) {$\dots$}
          (11,0) node[circle,draw] (e) {$\ast$}
 (12,0) node[circle,draw] (e1) {$\ast$}
 (13,0) node[circle,draw] (e2) {$\ast$}
 (11,-0.7) node[] (e3) {$m_r$-th}
         (5.7,-1) node[] (df) {};
    \draw (d) -- +(0,1) -| (g2);
    \draw (a) -- +(0,1.75) -| (h);
   \draw (d1) -- +(0,0.75) -| (d1);
    \draw (c)  --+(0,1.25) -| (e1);
    \draw (g1) -- +(0,0.75) -| (g1);
 \draw (e) -- +(0,0.75) -| (e);
 \draw (h1) -- +(0,0.75) -| (h1);
    \draw (b) -- +(0,1.5) -| (e2);
  \end{tikzpicture}
\end{center}
\caption{The visualization of the
action $\la_q(x_{1})$  on the  tensor product $x_{k_1}\otimes \cdots \otimes x_{m_r} $.}
\label{fig:FiguraExemple1}
\end{figure}

Case II(i)1(b). When $J\ra_{q}(\overline{x}_{1})q^{N-1}$ acts on the tensor, then new $p+r+1$ terms appear by using the right $q$-derivative formula \eqref{lq}. The $i^{\rm th}$ term has the coefficient $ q^{r+(p+r)} \langle\overline{x}_{1},x_i\rangle$. 
In terms of partitions, this corresponds to a case of Lemma \ref{induction}\eqref{proc21}: to create the new pair $\{1,i\}$ with color $-1$ and repaint the rightmost singleton $m_r$ or $k_p$ by $-1$ because now the number of negative pairs of the new partition $\tilde{\pi}$ is odd -- see Figure \ref{fig:FiguraExemple2}. Similarly to Step 1(a), we count the change of numbers and get 
$\Cr(\tilde{\pi})=\Cr(\pi)+u$, $\cs(\tilde{\pi})=\cs(\pi)-u+p$, $\cnp(\tilde{\pi},\tilde{f})=\cnp(\pi,f)+t$ and $\npssr(\tilde{\pi}, \tilde{f})= \npssr(\pi,f)-t+r$. Altogether, when moving from $(\pi,f)$ to $(\tilde{\pi}, \tilde{f})$, a new factor $q^{2r+p}\langle x_1, \overline x_i\rangle$ appears, which coincides with the coefficient appearing in the action of $J\ra_{q}(\overline{x}_{1})q^{N-1}$. 

\begin{figure}
\begin{center}
  \begin{tikzpicture}[thick,font=\small]
    \path              (0,0) node[circle,,fill=blue!20,draw] (a) {-}
          (1,0) node[circle,draw] (b) {-}
          (2,0) node[circle,draw] (c) {-}
           (0,-0.7) node[] (c1) {$1$-th}
          (3,0) node[circle,draw] (d) {-}
          (8.4,2) node (fdf) {$\{1, i\}$ colored by  -1}
          (4,0) node[circle,draw] (d1) {$\ast$}
          (4,-0.7) node[] (e3) {$k_1$-th}
          (5.1,-0.1) node[circle] (g) {$\dots$}
           (6,0) node[circle,draw] (g1) {$\ast$}
          (7,0) node[circle,draw] (g2) {$\ast$}
          (6,-0.7) node[] (e3) {$k_p$-th}
          (8,0) node[circle,fill=blue!20,draw] (h) {$\ast$}
         (8,-0.7) node[] (h2) {$i$-th}
         (9,0) node[circle,draw] (h1) {$\ast$}
          (9,-0.7) node[] (e3) {$m_1$-th}
         (11,0) node[circle,draw] (e) {$\ast$}
          (10.1,-0.1) node[circle] (g) {$\dots$}
          (11,0) node[circle,draw] (e) {$\ast$}
 (12,0) node[circle,draw] (e1) {$\ast$}
 (13,0) node[circle,draw] (e2) {$\ast$}
 (11,-0.7) node[] (e3) {$m_r$-th}
    (10.9,1) node (fdf) {  -1}
         (5.7,-1) node[] (df) {};
    \draw (d) -- +(0,1) -| (g2);
    \draw (a) -- +(0,1.75) -| (h);
   \draw (d1) -- +(0,0.75) -| (d1);
    \draw (c)  --+(0,1.25) -| (e1);
     \draw (g1) -- +(0,0.75) -| (g1);
 \draw (e) -- +(0,0.75) -| (e);
 \draw (h1) -- +(0,0.75) -| (h1);
    \draw (b) -- +(0,1.5) -| (e2);
  \end{tikzpicture}
\end{center}
\caption{The visualization of the
action $\ra_q(x_{1})$  on the  tensor product $x_{k_1}\otimes \cdots \otimes x_{m_r}$.}
\label{fig:FiguraExemple2}
\end{figure}

Case II(i)2: Suppose that $\np(\pi)$ is odd, or equivalently, the rightmost singleton is negative. This means that 
$$
\bigotimes_{k\in \Sing(\pi,f)} x_k =x_{k_1}\otimes \cdots \otimes x_{k_p}\otimes x_{i}\otimes x_{m_1}\otimes \cdots\otimes \overline{x}_{m_r} .
$$

Case II(i)2(a): The left action. The action of $\la_q(x_{1})$ creates $p+r+1$ terms. Two situations are possible. 
\begin{itemize}
\item If $r>0$ then we have a situation similar to Step 1(a) i.e.\ the $i^{\rm th}$ term has  the new coefficient $q^p\langle x_{1},x_i\rangle$. In terms of partitions this corresponds to a case of Lemma \ref{induction}\eqref{proc21}: we create the positive pair $\{1,i\}$ and then $\np(\tilde{\pi})$ is still odd so $m_r$ is still colored by $-1$. 

\item If $r=0$  then the $i^{\rm th}$ term creates the new factor $q^p\langle x_{1},\overline{x}_i\rangle$. In terms of partitions this corresponds to a case of Lemma \ref{induction}\eqref{proc21}: we create the negative pair $\{1, i\}$, so now $\np(\tilde{\pi})$ is even -- see Figure \ref{fig:FiguraExemple3}(a). The rightmost singleton $k_p$ of $\tilde \pi$ still has color 1.  
\end{itemize}
\begin{figure}[h]
\begin{center}
 \begin{tikzpicture}[thick,font=\small]
    \path   (4,3) node[circle] (a123) {(a)}
              (1,0) node[circle,fill=blue!20,draw] (c) {-}
      (0.9,-0.7) node[] (c1) {$1$-th}
          (2,0) node[circle,draw] (d) {-}
       (5.1,-0.7) node[] (c1) {$i$-th}
          (5.2,1.7) node (fdf) {Color -1}
         (4,1) node (fdf2) {Color 1}
          (3.1,-0.1) node[circle] (g) {$\dots$}
          (4,0) node[circle,draw] (g2) {$\ast$}
          (5,0) node[circle,fill=blue!20,draw] (h) {$\ast$}
    (6,0) node[circle,draw] (h2) {$\ast$}
         (3.8,-1.1) node[] (df) { };
    \draw (g2)  +(0,0.8) -| (g2);
    \draw (c)  --+(0,1.4) -| (h);
    \draw (d) -- +(0,2) -| (h2);
  \end{tikzpicture}
  \hspace{10mm}
 \begin{tikzpicture}[thick,font=\small]
    \path   (4,3) node[circle] (a123) {(b)}
          (1,0) node[circle,fill=blue!20,draw] (c) {-}
      (0.9,-0.7) node[] (c1) {$1$-th}
          (2,0) node[circle,draw] (d) {-}
 (5.1,-0.7) node[] (c1) {$i$-th}
 (4,-0.7) node[] (c1) {$k_p$-th}
          (5.2,1.7) node (fdf) {Color 1}
         (4,1) node (fdf2) {Color -1}
          (3.1,-0.1) node[circle] (g) {$\dots$}
          (4,0) node[circle,draw] (g2) {$\ast$}
          (5,0) node[circle,fill=blue!20,draw] (h) {$\ast$}
    (6,0) node[circle,draw] (h2) {$\ast$}
         (3.8,-1.1) node[] (df) { };
       \draw (g2)  +(0,0.8) -| (g2);
    \draw (c)  --+(0,1.4) -| (h);
    \draw (d) -- +(0,2) -| (h2);
  \end{tikzpicture}
\end{center}
\caption{The main structure of $(\tilde{\pi},\tilde{f})$ in Steps 2(a) and 2(b) when $r=0$.}
\label{fig:FiguraExemple3}
\end{figure}
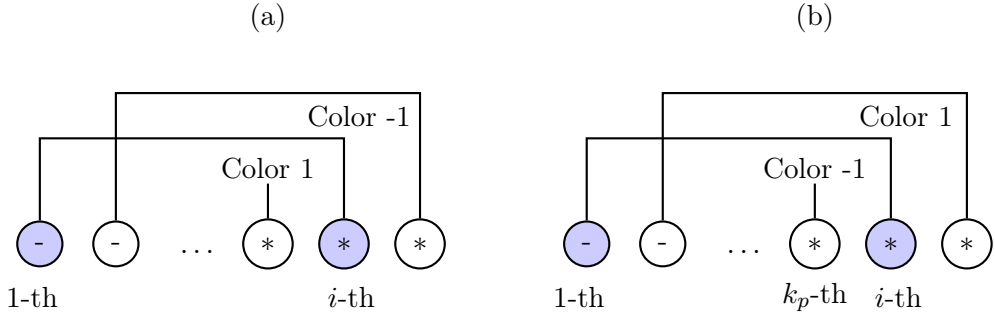

In both situations the new partition $(\tilde{\pi}, \tilde{f}) \in \PD_{1,2;\epsilon}(n)$ satisfies $\Cr(\tilde{\pi})=\Cr(\pi)+u$, $\cs(\tilde{\pi})=\cs(\pi)-u+p$, $\cnp(\tilde{\pi},\tilde{f})=\cnp(\pi,f)+t$ and $\npssr(\tilde{\pi}, \tilde{f})= \npssr(\pi,f)-t$, so the exponent of $q$ increases by $p$.  

Case II(i)2(b): The right action.  The operator $J\ra_{q}(\overline{x}_{1})q^{N-1}$ acting on the tensor product creates new $p+r+1$ terms. We have two situations. 
\begin{itemize}
\item If $r>0$ then the $i^{\rm th}$ term creates the new factor $q^{r+(p+r)}\langle x_1, \overline x_i\rangle$. Pictorially this corresponds to Lemma \ref{induction}\eqref{proc21}: the new negative pair $\{1,i\}$ is created and the singleton $m_r$ is repainted by $1$ because now $\np(\tilde{\pi})$ is even.
\item If $r=0$  then the $i^{\rm th}$ term has the new coefficient $q^p\langle \overline{x}_{1}, \overline{x}_i\rangle=q^p\langle x_{1}, x_i\rangle$. In terms of partitions  the new pair $\{1,i\}$ is created with color $1$, so the number of negative pairs is still odd and we must repaint the singleton $k_{p}$ by $-1$ -- see Figure \ref{fig:FiguraExemple3}(b). This is compatible with the fact that under this action we obtain $x_{k_1}\otimes \cdots \otimes \overline{x}_{k_p}$. 
\end{itemize}
Altogether, when moving from $(\pi,f)$ to $(\tilde{\pi}, \tilde{f})$, the exponent of $q$ increases by $2r+p$, which coincides with the coefficient appearing in the action of $J\ra_{q}(\overline{x}_{1})q^{N-1}$, creating the inner product $\langle  \overline{x}_{1},x_i\rangle$ if  $r>0$ and $\langle  x_{1},x_i\rangle$ if  $r=0$. Similarly to Case II(ii)1(b), we  get 
$\Cr(\tilde{\pi})=\Cr(\pi)+u$, $\cs(\tilde{\pi})=\cs(\pi)-u+p$, $\cnp(\tilde{\pi},\tilde{f})=\cnp(\pi,f)+t$ and $\npssr(\tilde{\pi}, \tilde{f})= \npssr(\pi,f)-t+r$.

\vspace{5mm}

 Case II(ii): $\epsilon(1)=1$ and $p=r=0$. In this case the partition $(\pi,f)$ has a unique singleton, and hence $\bigotimes_{k\in \Sing(\pi,f)} x_k =x_i$ or $\overline{x}_{i}$, according to the color of the singleton. We recall that $J\ra_{q}(\overline{x}_{1})x_i=J\ra_{q}(\overline{x}_{1})\overline{x}_{i}=0$, and so we only need to discuss the left $q$-derivative $l_q(x_1)$. The action creates the pair $\{1,i\}$. We have two situations.
\begin{itemize}
\item  If $(\pi,f)$ has the unique positive singleton $i$, then we have an even number of negative pairs.  
The $i^{\rm th}$ term of $l_q(x_1)x_i$ is $\langle x_{1},x_i\rangle$. In terms of partitions, this corresponds to Lemma \ref{induction}\eqref{proc22}: we create the pair $\{1, i\}$ with color $1$ to get a new pair partition $(\tilde{\pi},\tilde{f})$. 

\item  If $(\pi,f)$ has the unique negative singleton $i$, then we have an odd number of negative pairs.  
The $i^{\rm th}$ term of $l_q(x_1)\overline{x}_i$ is $\langle x_{1}, \overline{x}_i\rangle$. In terms of partitions, this corresponds to Lemma \ref{induction}\eqref{proc22}: we create the negative pair $\{1, i\}$ to get a new pair partition $(\tilde{\pi},\tilde{f})$. We emphasize that the number of negative pairs in this new partition is even, which is compatible with Proposition \ref{property} part (1).
\end{itemize}
Altogether, this pair creates a new partition $(\tilde{\pi}, \tilde{f})$ but it does not change the number $\Cr+ \cs+2 \cnp + 2\npssr$ by the same argument as in Case II(i)1(a).  This is compatible with the fact that the action $\la_q(x_{1})$ does not change the exponent of $q$. 

\vspace{5mm}

Case II(iii): $\pi$ does not have an singleton. Then the action of $d_q(x_1)$ on the vacuum gives zero. This is compatible with the observation that this situation does not appear in Lemma \ref{induction}, 
 i.e.\ we cannot pass from $ \PD_{2}(\{2,\dots, n\})$ (= vacuum vector) to $ \PD_{1,2}(n)$ by creating a new pair.

Through Case II(i) -- Case II(iii) and by (the $\e$-compatible version of) Lemma \ref{induction}, we conclude that the action of $\D^{\e(1)}(x_1)$ yields all partitions in $\PD_{1,2;\epsilon}(n)$ with the desired coefficients. Hence we complete the proof.  
\end{proof}

\begin{example} We have the formula
\begin{align*} &\D(x_1)\D(x_2)\D^{\ast}(x_3)\D^{\ast}(x_4)\Omega\\&=q\langle x_{1},x_3\rangle\langle x_{2},x_4\rangle+q\langle  x_{1},\overline x_3\rangle\langle x_{2},\overline x_4\rangle+\langle x_{1},x_4\rangle\langle x_{2},x_3\rangle+q^2\langle x_{1},\overline x_4\rangle\langle x_{2},\overline x_3\rangle,\end{align*}
which corresponds to the colored partitions in Figure \ref{fig:FiguraExempleDDD*D*}. 
\setlength{\unitlength}{0.5cm}
\begin{figure}[h]
\begin{center}
\psset{nodesep=3pt}
\psset{angle=90}

\rnode{A}{\Node{1}} \rnode{B}{\Node{2}}  \rnode{C}{\Node{3}}
\rnode{D}{\Node{4}}  
\ncbar[armA=.4]{A}{C}
\put(-3,1.8){1}
\ncbar[armA=.6]{B}{D}
\put(-1.5,2.2){1}
\hspace{3mm}  
\rnode{A}{\Node{1}} \rnode{B}{\Node{2}}  \rnode{C}{\Node{3}}
\rnode{D}{\Node{4}}   
\ncbar[armA=.4]{A}{C}
\put(-3.2,1.8){-1}
\ncbar[armA=.6]{B}{D}
\put(-1.5,2.2){-1}
\hspace{3mm}  
\rnode{A}{\Node{1}} \rnode{B}{\Node{2}}  \rnode{C}{\Node{3}}
\rnode{D}{\Node{4}}  
\ncbar[armA=1]{A}{D}
\put(-2.1,1.8){1}
\ncbar[armA=.4]{B}{C}
\put(-2.1,3){1} 
\hspace{3mm}  
\rnode{A}{\Node{1}} \rnode{B}{\Node{2}}  \rnode{C}{\Node{3}}
\rnode{D}{\Node{4}}  
\ncbar[armA=1]{A}{D}
\put(-2.4,1.8){-1}
\ncbar[armA=.4]{B}{C}
\put(-2.4,3){-1}
\end{center}
\vspace{-5mm}
\caption{Pair partitions of type D --  $\PD_{2;(1,1,\ast,\ast)}(4)$.} 
\label{fig:FiguraExempleDDD*D*}
\end{figure}
\end{example}

When the involution is identity, our vector-version of Wick's formula can be written in terms of set partitions (of type A). 

\begin{proposition}\label{main theorem2}
Suppose that $x_i = \overline{x}_i \in H$, $i=1,2,\dots,n$. Then  
\begin{align*}
&\D^{\epsilon(1)}(x_1)\cdots \D^{\epsilon(n)}(x_n)\Omega \notag\\
&=\sum_{\pi\in \P_{1,2;\epsilon}(n)} 2^{-\outsr(\pi)}q^{\Cr(\pi)+\cs(\pi)} \prod_{\substack{\{i<j\} \in \Pair(\pi)} }\left(\left(1 + q^{2 \cover(\{i<j\}) +2 \ssr(\{i<j\})}\right)\langle x_i,x_j\rangle \right)\bigotimes_{i\in \Sing(\pi)} x_i, 
\end{align*}
where $\bigotimes_{i\in \Sing(\pi)} x_i$ denotes $x_{i_1}\otimes \cdots \otimes x_{i_k}$ when $\Sing(\pi)=\{i_1 < \cdots < i_k\}$. 
 \end{proposition}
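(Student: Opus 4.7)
The plan is to derive the proposition from Theorem~\ref{main theorem} by performing, for each fixed $\pi\in\P_{1,2;\e}(n)$, the inner sum over type~D colorings $f$. Under the assumption $\bar x_i=x_i$, the inner products $\langle x_i,x_j\rangle$ and $\langle x_i,\bar x_j\rangle$ coincide, and the singleton tensor $\bigotimes_{i\in\Sing(\pi,f)}x_i$ reduces to $\bigotimes_{i\in\Sing(\pi)}x_i$ regardless of $f$ (the involution acts trivially on the possibly negative rightmost singleton). Since a singleton never covers a pair, I will use
\begin{equation*}
2\cnp(\pi,f)+2\npssr(\pi,f)=\sum_{V\in\NPair(\pi,f)}2\bigl(\cover(V)+\ssr(V)\bigr),
\end{equation*}
so that the whole $f$-dependence is concentrated in this exponent.

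Next I isolate the admissible pair colorings, noting that singleton colors are uniquely determined by the parity of $\np(\pi,f)$ and so contribute nothing independently. A pair $V\in\Pair(\pi)$ has its color forced iff $\sr(V)=\lcr(V)=\cover(V)=0$; as in Remark~\ref{TD}(2), these are exactly the leftmost pairs $L_i$ of the outer connected components $C_i$ counted in $\outsr(\pi)$ (outerness supplies $\lcr(L_i)=\cover(L_i)=0$, and $C_i\in\outsr(\pi)$ is equivalent to $\sr(L_i)=0$). Ordering the relevant $C_i$ from left to right and cascading the forced-color conditions, I plan to show that the combined constraints are equivalent to the per-component parity requirement ``$\np(\pi_i,f|_{\pi_i})$ is even'' for each $C_i\in\outsr(\pi)$, exactly as in \eqref{DefD}, while the outer components $C_i\notin\outsr(\pi)$ impose no constraint on $f|_{\pi_i}$. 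This translation is the main technical step.

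Granting this description, I factor the sum over admissible pair colorings along the decomposition $\Pair(\pi)=\bigsqcup_i\pi_i$. For $C_i\notin\outsr(\pi)$ the colorings of $\pi_i$ are unrestricted, yielding $\prod_{V\in\pi_i}(1+q^{2\cover(V)+2\ssr(V)})$. For $C_i\in\outsr(\pi)$, the even-parity subsum equals
\begin{equation*}
\tfrac12\prod_{V\in\pi_i}\bigl(1+q^{2\cover(V)+2\ssr(V)}\bigr)+\tfrac12\prod_{V\in\pi_i}\bigl(1-q^{2\cover(V)+2\ssr(V)}\bigr),
\end{equation*}
and the second product vanishes because the leftmost pair $L_i\in\pi_i$ satisfies $\cover(L_i)=0$ (by outerness of $C_i$) and $\ssr(L_i)=0$ (since $C_i\in\outsr(\pi)$ forbids any singleton with index exceeding $\min C_i=\min L_i$), so it contributes the factor $1-q^0=0$. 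Multiplying over all outer components produces the prefactor $2^{-\outsr(\pi)}$; reinstating $q^{\Cr(\pi)+\cs(\pi)}$ and the tensor $\bigotimes_{i\in\Sing(\pi)}x_i$ from Theorem~\ref{main theorem} then gives the asserted identity.
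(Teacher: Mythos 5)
Your proposal is correct, and its overall strategy is the one the paper uses: specialize Theorem \ref{main theorem} to $\overline{x}_i=x_i$, fix $\pi$, evaluate the inner sum over type D colorings $f$ of $\Pair(\pi)$, and identify the pairs with forced color (those with $\lcr(V)=\cover(V)=\sr(V)=0$) as the leftmost pairs of the components counted by $\outsr(\pi)$. The difference lies in how you evaluate that color sum. The paper argues in one stroke: a forced pair has $\cover(V)=\ssr(V)=0$, hence contributes $q^{0}$ to the exponent whatever its color, so summing over all $2^{\#\Pair(\pi)}$ sign choices overcounts the type D colorings by exactly a factor $2^{\#\{\text{forced pairs}\}}$, which converts $\prod_V\bigl(1+q^{2\cover(V)+2\ssr(V)}\bigr)$ into $2^{\outsr(\pi)}\sum_f q^{2\cnp(\pi,f)+2\npssr(\pi,f)}$ directly. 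You instead first translate the right-to-left coloring rule into the per-component parity condition of \eqref{DefD} (extended to partitions with singletons) and then extract the even-parity subsum via $\tfrac12\prod_{V\in\pi_i}\bigl(1+q^{e_V}\bigr)+\tfrac12\prod_{V\in\pi_i}\bigl(1-q^{e_V}\bigr)$, killing the second product through $e_{L_i}=0$. Both arguments hinge on the same key fact, $\cover(L_i)=\ssr(L_i)=0$ for the distinguished pair of each $\outsr$-component. Your route costs one extra lemma --- the cascading equivalence, which does hold because every outer component strictly to the right of an $\outsr$-component is again an $\outsr$-component, so the nested parity constraints telescope to independent per-component ones and the non-$\outsr$ components never enter any constraint --- but in exchange it makes the provenance of the factor $2^{-\outsr(\pi)}$ completely transparent and ties the computation back to Remark \ref{TD}. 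The paper's doubling argument is shorter because it never needs the parity reformulation. Your sketch leaves the cascading step as a plan rather than a proof, but the plan is sound and the details fill in as indicated.
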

\begin{proof}
 Since we have $\overline{x}_i=x_i$, coloring of singletons is not important. 
Take $\pi \in \P_{1,2;\epsilon}(n)$ and suppose that $\Pair(\pi)=\{V_1,\dots, V_k\}$. Then 
\begin{equation} 
\begin{split}
&\prod_{i=1}^k \left(1 + q^{2 (\cover(V_i) + \ssr(V_i)  )}\right) \\
&=  
 \sum_{(n_1,\dots,n_k)\in\{1,-1\}^k}\prod_{i=1}^k  \left(q^{2 (\cover(V_i) + \ssr(V_i)  )}\right)^{\delta_{n_i,-1}} \\
&= 
 \sum_{(n_1,\dots,n_k)\in\{1,-1\}^k}  q^{2\sum_{i=1}^k (\cover(V_i) + \ssr(V_i)  )\delta_{n_i,-1}}\\ 
&= 
2^{\#\{V \in \Pair(\pi)\,\mid\, \lcr(V)\,=\,\cover(V)\,=\,\sr(V)\,=\,0\}} \sum_{f} q^{2\cnp(\pi,f) + 2\npssr(\pi,f) },
\end{split}
\end{equation}
where $f$ runs over all possible type D colorings of $\Pair(\pi)$.
The last formula follows from the
observation that a pair block $V \in \pi$ must be painted by a unique color ($1$ or $-1$) if and only if 
\begin{align}
\lcr(V)=\cover(V)= \sr(V)=0\label{Warunki1-1TypeD}
\end{align} 
by the definition of partitions of  type $D$. So the sum 
\begin{equation}
\sum_{(n_1,\dots,n_k)\in\{1,-1\}^k}  q^{2\sum_{i=1}^k (\cover(V_i) + \ssr(V_i)  )\delta_{n_i,-1}}
\end{equation}
counts the colorings of $V$ twice as many as the type D colorings of $V$ whenever $V$ satisfies \eqref{Warunki1-1TypeD}. 

From Theorem \ref{main theorem} it is sufficient to show that 
the number of pairs $V$ which satisfy condition \eqref{Warunki1-1TypeD} is equal to $\outsr(\pi)$, the number of outer connected components with size at least $2$ without singletons on the right. 
This follows from the fact that each connected component $C\in \outsr(\pi)$ with size $\geq2$ contains a unique block of $\pi$ which does not have a left crossing or a covering. In fact it is the block of $\pi$ having the minimal point of $C$.  
\end{proof}

The main theorem is the Wick formula of type D, which is similar to the type B case when $\alpha=1$, cf.\ \eqref{WickB}, but now we use completely different partitions. 

\begin{theorem}\label{thm2} 
Suppose that $x_1,\dots,x_n \in H$ and $\epsilon\in\{1,\ast\}^n$. 
\begin{enumerate}[\rm(1)] 
\item 
$\displaystyle
\langle\Omega, \D^{\epsilon(1)}(x_1)\cdots \D^{\epsilon(n)}(x_n)\Omega\rangle_{q}
=
\sum_{(\pi,f) \in \PD_{2;\epsilon}(n)} q^{\Cr(\pi)+2 \cnp(\pi,f)}\prod_{\substack{\{i<j\} \in \pi \\ f(\{i<j\})=1} }\langle x_i, x_j\rangle \prod_{\substack{\{i<j\} \in \pi\\ f(\{i<j\})=-1}} \langle x_i, \overline{x}_j\rangle. 
$
\item\label{GaussOp}
$\displaystyle
\langle\Omega, \G(x_1)\cdots \G(x_n)\Omega\rangle_{q}=
\displaystyle\sum_{(\pi,f) \in \PD_2(n)}q^{\Cr(\pi)+2 \cnp(\pi,f)}\prod_{\substack{\{i<j\} \in \pi \\ f(\{i<j\})=1} }\langle x_i, x_j\rangle \prod_{\substack{\{i<j\} \in \pi\\ f(\{i<j\})=-1}} \langle x_i,\overline{x}_j\rangle. 
$
\end{enumerate}
\end{theorem}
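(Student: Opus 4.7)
The plan is to deduce both parts directly from Theorem \ref{main theorem}, which already gives a formula for $\D^{\epsilon(1)}(x_1)\cdots \D^{\epsilon(n)}(x_n)\Omega$ as a vector in $\Fq$. The main observation is that the inner product $\langle \Omega, v\rangle_q$ is simply the coefficient of $\Omega = \bigotimes_{i \in \emptyset} x_i$ in $v$, since $\Omega$ is orthogonal to $H^{\otimes k}$ for $k \geq 1$ in the $\langle\cdot,\cdot\rangle_q$ inner product (recall $\widehat P_q$ preserves the grading of $\F$).

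For part (1), I would apply Theorem \ref{main theorem} and ask which colored partitions $(\pi,f) \in \PD_{1,2;\epsilon}(n)$ contribute to $\langle \Omega, \D^{\epsilon(1)}(x_1)\cdots \D^{\epsilon(n)}(x_n)\Omega\rangle_q$. By the above observation, only those with $\bigotimes_{i \in \Sing(\pi,f)} x_i = \Omega$ contribute, i.e.\ those with $\Sing(\pi) = \emptyset$, which are exactly the pair partitions $(\pi,f)\in \PD_{2;\epsilon}(n)$. On such partitions one has $\cs(\pi) = 0$ and $\npssr(\pi,f) = 0$ since there are no singletons, so the exponent of $q$ in Theorem \ref{main theorem} collapses to $\Cr(\pi) + 2\cnp(\pi,f)$, yielding the stated formula.

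For part (2), expand the product using $\G(x_i) = \D(x_i) + \D^\ast(x_i)$ and linearity, so that
\begin{equation*}
\langle\Omega, \G(x_1)\cdots \G(x_n)\Omega\rangle_{q} = \sum_{\epsilon \in \{1,\ast\}^n} \langle\Omega, \D^{\epsilon(1)}(x_1)\cdots \D^{\epsilon(n)}(x_n)\Omega\rangle_{q}.
\end{equation*}
Substitute the identity from part (1) and use the disjoint union decomposition $\PD_2(n) = \bigsqcup_{\epsilon \in \{1,\ast\}^n} \PD_{2;\epsilon}(n)$, which holds because each pair partition is $\epsilon$-compatible for exactly one $\epsilon$ (namely $\epsilon(i) = 1$ if $i$ is the left leg of its pair and $\epsilon(i) = \ast$ if $i$ is the right leg). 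This gives (2) at once.

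There is essentially no obstacle beyond invoking Theorem \ref{main theorem}: the argument is a clean specialization plus bookkeeping. The only thing to double-check is the $n$ odd case, where $\PD_{2;\epsilon}(n) = \emptyset$ and $\PD_2(n) = \emptyset$, so both sides vanish (the right side by the empty-sum convention, the left side because $\langle \Omega, \D^{\epsilon(1)}(x_1)\cdots \D^{\epsilon(n)}(x_n) \Omega\rangle_q = 0$ for parity reasons, since creation and annihilation operators shift the grading by $\pm 1$).
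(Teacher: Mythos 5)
Your proposal is correct and follows essentially the same route as the paper, which simply notes that (1) is clear from Theorem \ref{main theorem} (only singleton-free terms survive the vacuum pairing, and $\cs=\npssr=0$ for pair partitions) and that (2) follows by summing over all $\epsilon$. Your write-up just spells out the bookkeeping that the paper leaves implicit.
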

\begin{proof}
(1) is clear from Theorem \ref{main theorem} and (2) follows from (1) by taking the sum over all $\epsilon$. 
\end{proof}

\begin{corollary}\label{Wick1} Assume that $x_i = \overline{x}_i \in H$ for $i=1,\dots,2m$. In the limit $q\to1$, we recover the $t$-transformed classical Brownian motion \cite[Theorem 9.2]{BW01} with $t=2$: 
\begin{align*}
\langle\Omega, G_1(x_1)\cdots G_1(x_{2 m})\Omega\rangle_1
=\sum_{\pi \in \P_2(2 m)} 2^{m-\#\out(\pi)}\prod_{\{i<j\} \in\pi }\langle x_i, x_j\rangle. 
\end{align*}
\end{corollary}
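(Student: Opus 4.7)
The plan is to derive this as the $q \to 1$ limit of Theorem \ref{thm2}\eqref{GaussOp}, combining three observations: continuity of the explicit Wick sum in $q$, the collapse of the type-B/D dichotomy of inner products when $\overline{x}_i = x_i$, and a counting of admissible type D colorings on each pair partition.

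First, under the hypothesis $x_i = \overline{x}_i$ one has $\langle x_i, x_j\rangle = \langle x_i, \overline{x}_j\rangle$ for all $i,j$, so the product over pairs in Theorem \ref{thm2}\eqref{GaussOp} is independent of the coloring $f$. The sum over $\PD_2(2m)$ is finite and each summand is a polynomial in $q$ of degree at most $\binom{2m}{2}$, so the limit $q\to 1$ can be taken termwise and the exponential factor $q^{\Cr(\pi)+2\cnp(\pi,f)}$ becomes $1$. Consequently,
\begin{equation*}
\langle\Omega, G_1(x_1)\cdots G_1(x_{2m})\Omega\rangle_1
= \sum_{\pi \in \P_2(2m)} N(\pi) \prod_{\{i<j\} \in \pi} \langle x_i, x_j\rangle,
\end{equation*}
where $N(\pi)$ denotes the number of maps $f\colon \pi \to \{\pm 1\}$ such that $(\pi, f) \in \PD_2(2m)$.

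Next I would compute $N(\pi)$ using the characterization \eqref{DefD}. Writing the outer connected components of $\pi$ as $C_1, \dots, C_k$ (all of size $\geq 2$ since $\pi$ is a pair partition, so $k = \#\out(\pi)$), the decomposition $\pi = \bigsqcup_{i=1}^k \pi_i$ of \eqref{eqatouterconnectedcomponents} partitions the $m$ pairs into subsets of sizes $m_i := |\pi_i|$ with $\sum_{i=1}^k m_i = m$. A type D coloring is exactly a choice of coloring on each $\pi_i$ with an even number of negative pairs. For a set of $m_i$ pairs the number of $\pm 1$-colorings with an even number of $-1$'s equals $2^{m_i - 1}$, so
\begin{equation*}
N(\pi) = \prod_{i=1}^{k} 2^{m_i - 1} = 2^{\sum_i m_i - k} = 2^{m - \#\out(\pi)},
\end{equation*}
which is exactly the combinatorial factor in the claim.

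The main conceptual point, and the only place where the structure of type D colorings enters nontrivially, is the counting argument above; once \eqref{DefD} is in hand this is elementary. No estimate is needed to justify the limit since we are only manipulating a finite polynomial identity, and the resulting formula coincides with the $t=2$ Wick formula of Bo\.zejko--Wysocza\'nski \cite[Theorem 9.2]{BW01}, as stated.
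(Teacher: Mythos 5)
Your proof is correct, but it takes a genuinely different route from the paper's. The paper obtains the corollary from Proposition \ref{main theorem2} (the vector-valued Wick formula for the identity involution), where the sum over type D colorings is already carried out for general $q$: the coloring sum is traded for the product $\prod_{V\in\Pair(\pi)}\bigl(1+q^{2\cover(V)+2\ssr(V)}\bigr)$ together with the prefactor $2^{-\outsr(\pi)}$, and the corollary then follows by summing over $\e$ and letting $q\to1$ (for a pair partition $\outsr(\pi)=\out(\pi)$ and each of the $m$ factors tends to $2$). You instead start from the scalar formula of Theorem \ref{thm2}\eqref{GaussOp}, pass to $q=1$ termwise in the finite sum, and count the admissible colorings of a fixed $\pi\in\P_2(2m)$ directly via the parity characterization \eqref{DefD}; your count $N(\pi)=\prod_i 2^{|\pi_i|-1}=2^{m-\#\out(\pi)}$ is exactly right, since the $\pi_i$ of \eqref{eqatouterconnectedcomponents} are nonempty and partition $\Pair(\pi)$, and a nonempty set of $m_i$ pairs admits $2^{m_i-1}$ colorings with an even number of negative pairs. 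Your argument is shorter and self-contained for the $q\to1$ limit, at the cost of leaning on \eqref{DefD}, which the paper justifies only in Remark \ref{TD}; the paper's detour through Proposition \ref{main theorem2} buys a $q$-refined identity of independent interest. Both routes interpret the left-hand side at $q=1$ in the same way, as the limit of the polynomial moment formula.
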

\begin{proof}
The formula follows from Proposition \ref{main theorem2} and taking the sum over $\e$. 
\end{proof}

\begin{corollary}\label{free Wick}  Assume that $x_i \in H$ for $i=1,\dots,2m$. 
When $q=0$ we recover the moments for a semicircular system 
\begin{align*}
\langle\Omega, G_0(x_1)\cdots G_0(x_{2 m})\Omega\rangle_0
=\sum_{\pi \in \NC_2(2 m)} \prod_{\{i<j\} \in\pi }\langle x_i, x_j\rangle,  
\end{align*}
where $\NC_2(2m):= \{\pi \in \P_2(2m) \mid \Cr(\pi)=0\}$.  Note that the involution ${}^-$ does not appear in the formula. 
\end{corollary}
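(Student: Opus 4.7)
The plan is to specialize Theorem \ref{thm2}\eqref{GaussOp} to $q=0$ and to show that the non-vanishing terms are exactly indexed by non-crossing pair partitions with all blocks colored positively. The starting point is
\begin{equation*}
\langle\Omega, G_0(x_1)\cdots G_0(x_{2m})\Omega\rangle_0
=\sum_{(\pi,f) \in \PD_2(2m)} 0^{\,\Cr(\pi)+2\cnp(\pi,f)}\prod_{\substack{\{i<j\}\in\pi\\ f(\{i<j\})=1}}\langle x_i,x_j\rangle\prod_{\substack{\{i<j\}\in\pi\\ f(\{i<j\})=-1}}\langle x_i,\overline{x}_j\rangle,
\end{equation*}
using the convention $0^0=1$. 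Only the pairs $(\pi,f)$ with $\Cr(\pi)+2\cnp(\pi,f)=0$ survive, so I need to characterize those and show that in every surviving term the coloring $f$ is identically $+1$.

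First, $\Cr(\pi)=0$ forces $\pi\in\NC_2(2m)$. Second, since $\pi$ is non-crossing, each block of $\pi$ is itself a connected component in the sense of Section \ref{sec3.2}, and every non-outer block is covered by (at least) one outer block. Hence the condition $\cnp(\pi,f)=0$ forces every non-outer block to be colored $+1$. It remains to handle the outer blocks. Using the characterization \eqref{DefD} of type D pair partitions, for every outer connected component $C_i$ (which in the non-crossing case equals an outer block $B_i$) the restriction $\pi_i$ consists of $B_i$ together with all blocks nested inside it; these nested blocks are already positive by the previous step, so $\np(\pi_i,f|_{\pi_i})$ is even if and only if $B_i$ itself is positive. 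Therefore, for every surviving term $f\equiv 1$.

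Conversely, for any $\pi\in\NC_2(2m)$, the coloring $f\equiv 1$ does define an element of $\PD_2(2m)$: the conditions \eqref{color1} are trivially satisfied since there are no negative pairs, and there are no singletons to color. Moreover $\Cr(\pi)=\cnp(\pi,f)=0$ for this pair. Combining both directions, the sum reduces to
\begin{equation*}
\sum_{\pi\in\NC_2(2m)}\prod_{\{i<j\}\in\pi}\langle x_i,x_j\rangle,
\end{equation*}
which is the claimed free Wick formula. The involution $\overline{\ \ }$ disappears because only the positive-pair factors $\langle x_i,x_j\rangle$ occur. No serious obstacle is expected: the only point that requires care is the interplay between $\cnp(\pi,f)=0$ and the type D constraint on outer connected components, which is handled cleanly by the characterization \eqref{DefD}.
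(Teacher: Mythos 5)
Your proposal is correct and follows essentially the same route as the paper: specialize Theorem \ref{thm2}\eqref{GaussOp} to $q=0$ with the convention $0^0=1$ and observe that only terms with $\Cr(\pi)+2\cnp(\pi,f)=0$ survive, which forces $\pi$ non-crossing and $f\equiv 1$. The paper's proof merely asserts that this vanishing condition implies all blocks are positive ``by the definition of type D partitions,'' whereas you supply the justification (covered blocks positive via $\cnp=0$, outer blocks positive via \eqref{DefD}) together with the converse inclusion; this is a welcome elaboration, not a different argument.
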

\begin{proof} 
The formula follows from Corollary \ref{thm2}\eqref{GaussOp}. Note that $0^{\Cr(\pi)+2\cnp(\pi,f)}$ gives a nonzero value (=1) only when $\Cr(\pi)+2 \cnp(\pi,f)=0$, which implies that all blocks are positive by the definition of type D partitions.  
\end{proof}

\subsection{Traciality of the vacuum state} 
In the context of the von Neumann algebra generated by a free Brownian motion ($q=0$ case), it is common to consider a Brownian motion indexed by a real Hilbert subspace; otherwise the vacuum state would not be a trace. We follow this strategy and assume that $H_\R$ is a real Hilbert subspace of $H$ such that $H=H_\R \oplus i H_\R$. 
When considering elements in $H_\R$, it holds true that $\langle x,y\rangle=\langle y,x\rangle$. 
Let $\A$ be the von Neumann algebra generated by $\{\G(x)\mid x\in H_\R\}$ acting on the completion of $\Fq$.

\begin{proposition}\label{NTrace} Let $q \in (-1,1)$.  Suppose that $\text{\rm dim}(H_\R) \geq2$. Then the vacuum state is a trace on $\A$  if and only if $q=0$. 
\end{proposition}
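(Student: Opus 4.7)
The plan is to treat the two implications separately. For the $(\Leftarrow)$ direction, when $q=0$ the symmetrizer $\widehat{P}_0$ is the identity, so $\Fq$ coincides with the full Fock space and $\D(x)=\la(x)$, $\D^\ast(x)=\la^\ast(x)$. Hence $\A$ is the von Neumann algebra of Voiculescu's free semicircular system indexed by $H_\R$, and the vacuum state is the standard faithful tracial state. Alternatively, Corollary \ref{free Wick} presents the moments of $\G(x_1)\cdots\G(x_n)$ as a sum over noncrossing pair partitions of $\langle x_i,x_j\rangle$, which is manifestly cyclically invariant in $(x_1,\ldots,x_n)$.

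For the $(\Rightarrow)$ direction, assume $q\in(-1,1)\setminus\{0\}$ and fix two linearly independent vectors $x,y\in H_\R$. I would show that
$$
\tau(\G(x)\G(y)^2\G(x)) \neq \tau(\G(y)^2\G(x)^2),
$$
which contradicts the trace identity $\tau(AB)=\tau(BA)$ with $A=\G(x)$ and $B=\G(y)^2\G(x)$. Applying Theorem \ref{thm2}(2), summing over the five colored pair partitions of $\PD_2(4)$ listed in Example \ref{PD4}, and using that $\langle x,y\rangle\in\R$ (for $x,y\in H_\R$) together with $\langle y,\bar x\rangle=\overline{\langle x,\bar y\rangle}$ (by selfadjointness of the involution), a direct computation yields
$$
\Delta(x,y) := \tau(\G(x)\G(y)^2\G(x)) - \tau(\G(y)^2\G(x)^2) = q|\langle x,\bar y\rangle|^2 + q^2\langle x,\bar x\rangle\langle y,\bar y\rangle - (q+q^2)\overline{\langle x,\bar y\rangle}^2.
$$

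It remains to choose $x,y$ so that $\Delta(x,y)\neq 0$. The cleanest case is when the involution fixes a $2$-dimensional real subspace of $H_\R$: picking $x,y$ orthonormal there gives $\langle x,\bar y\rangle=0$ and $\langle x,\bar x\rangle=\langle y,\bar y\rangle=1$, whence $\Delta=q^2\neq 0$. For an arbitrary selfadjoint involution I would decompose $H=H_+\oplus H_-$ into its spectral subspaces, parametrize the Hermitian $2\times 2$ matrix $\bigl(\langle x_i,\bar x_j\rangle\bigr)_{i,j=1,2}$ restricted to a $2$-dimensional real subspace of $H_\R$, and verify directly that $\Delta$ is not identically zero in $(x,y)$. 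The main obstacle I anticipate is this last step: the interaction between a general involution and an arbitrary real form $H_\R$ can be delicate, but the dimension hypothesis $\dim H_\R\geq 2$ leaves enough freedom to adjust $(x,y)$ and secure $\Delta(x,y)\neq 0$ for every $q\in(-1,1)\setminus\{0\}$.
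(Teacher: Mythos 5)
Your proposal is correct and follows essentially the same route as the paper: apply the $n=4$ Wick formula of Theorem \ref{thm2}(2) to a fourth moment and its cyclic permutation, observe that everything cancels except a $q^2$-term involving $\langle x_i,\overline{x}_j\rangle$, and then choose two orthogonal unit eigenvectors of the involution in $H_\R$ to make that term nonzero. The residual worry you flag about an involution that does not interact nicely with $H_\R$ is not actually resolved in the paper either --- it simply asserts the existence of two orthogonal unit eigenvectors, i.e.\ it implicitly works in your ``cleanest case,'' where your computation already gives $\Delta=\pm q^2\neq 0$.
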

\begin{proof} Corollary \ref{thm2}\eqref{GaussOp} for $n=4$ reads (see Example \ref{PD4})
\begin{equation}
\begin{split}
&\langle\Omega, \G(x_1)\G(x_2)\G(x_3)\G(x_4)\Omega\rangle_{q} \\
&\qquad= 
\langle x_1, x_2\rangle\langle x_3, x_4\rangle 
+ q \langle x_1, x_3\rangle\langle x_2, x_4\rangle+ q \langle x_1, \overline{x}_3\rangle\langle x_2, \overline{x}_4\rangle \\
&\qquad\quad
+\langle x_1, x_4\rangle\langle x_2, x_3\rangle+ q^2\langle x_1, \overline{x}_4\rangle\langle x_2, \overline{x}_3\rangle 
\end{split}
\end{equation}
and by permuting $x_1,x_2,x_3,x_4$, 
\begin{equation}
\begin{split}
&\langle\Omega, \G(x_2)\G(x_3)\G(x_4)\G(x_1)\Omega\rangle_{q} \\
&\qquad= 
\langle x_2, x_3\rangle\langle x_1, x_4\rangle 
+ q \langle x_2, x_4\rangle\langle x_1, x_3\rangle + q \langle x_2, \overline{x}_4\rangle\langle x_1, \overline{x}_3\rangle \\
&\qquad\quad
+\langle x_1, x_2\rangle\langle x_3, x_4\rangle+ q^2\langle x_1, \overline{x}_2\rangle\langle x_3, \overline{x}_4\rangle, 
\end{split}
\end{equation}
where the assumption $x_1,\dots, x_4 \in H_\R$ was used. 
Hence 
\begin{equation}
\begin{split}
&\langle\Omega, \G(x_1)\G(x_2)\G(x_3)\G(x_4)\Omega\rangle_{q}-\langle\Omega, \G(x_2)\G(x_3)\G(x_4)\G(x_1)\Omega\rangle_{q} \\
&\qquad\qquad= q^2(\langle x_1, \overline{x}_4\rangle\langle x_2, \overline{x}_3\rangle-\langle x_1, \overline{x}_2\rangle\langle x_3, \overline{x}_4\rangle).
\end{split}
\end{equation}
Therefore the vacuum state is not a trace when $q \neq 0$, because 
when  \text{dim}$(H_\R) \geq 2$ there are two orthogonal unit eigenvectors $e_1,e_2$ of the involution ${}^-$, and we take $x_1=x_2=e_1$ and $x_3=x_4=e_2$. 
When $q=0$, the von Neumann algebra becomes the free von Neumann algebra and the traciality of the vacuum state is well known. It actually follows from the free Wick formula Corollary \ref{free Wick}.
\end{proof}

\begin{center}
Open problems
\end{center}

\begin{enumerate}[\rm(1)]

\item Study the von Neumann algebra $\A$, in particular, injectivity, completely bounded approximation property, cyclic separating property of the vacuum,  factoriality and type.  

\item Prove the existence of a classical Markov process realization of the Brownian motion of type D (see \cite{BKS97} for the type A case). 

\item Find a connection between noncrossing partitions of type D in \cite{AR04,R97} and our pair partitions of type D. This problem may be related to the problem of finding a ``free probability of type D'' in the spirit of Biane, Goodman and Nica \cite{BGN03}. 

\item Construct a Fock space deformed by affine Coxeter groups which are infinite groups.

\item Compute the explicit form of $\mu_q$, the distribution of the Gaussian operator of type D. Is it absolutely continuous with respect to the Lebesgue measure?  

\item Describe the distribution of $\G(x)$ when $x$ is not an eigenvector of the involution. 

\item Find the exact values of the norms of creation and the Gaussian operators of type D.

\end{enumerate}

\begin{center} Acknowledgments
\end{center}

The work was supported by the MAESTRO 
grant DEC-2011/02/A/ ST1/00119 (M.\ Bo\.zejko), Austrian Science Fund (FWF) Project No P 25510-N26 (W.\ Ejsmont), grant number 2014/15/B/ST1/00064 from the Narodowe
Centrum Nauki (W.\ Ejsmont), Wymian\k{e} osobow\k{a} z Austri\k{a} Project No DWM.ZWB.183.1.2016 (M.\ Bo\.zejko, W.\ Ejsmont)  and JSPS Grant-in-Aid for Young Scientists (B) 15K17549 
(T.\ Hasebe).


\begin{thebibliography}{99999999}
\bibitem[AR04]{AR04} C.A.\ Athanasiadis and V.\ Reiner, Noncrossing partitions for the group $D_n$, SIAM J.\ Discrete Math.\ 18, No.\ 2 (2004), 397--417.  

\bibitem[BGN03]{BGN03}  P.\ Biane,  F.\ Goodman and A.\ Nica, Non-crossing cumulants of type B, Trans.\ Amer.\ Math.\ Soc.\ 355 (2003), 2263--2303.



\bibitem[Bou02]{BUR02} N.\ Bourbaki, Lie Groups and Lie Algebras, Chapters 4-6, Springer, New York, 2002.

\bibitem[BB06]{BB} M.\ Bo\.zejko,  and W. Bryc, On a class of free {L}\'evy laws related to a regression
              problem,  J.\ Funct.\ Anal.\ 236(1) (2006), 59--77. 

\bibitem[BEH15]{BEH15} M.\ Bo\.zejko, W. Ejsmont and T.\ Hasebe, Fock space associated to Coxeter groups of type B,  J.\ Funct.\ Anal.\ 269(6) (2015), 1769--1795. 



\bibitem[BKS97]{BKS97}
M.\ Bo\.zejko, B.\ K\"{u}mmerer and R.\ Speicher, 
 $q$-{G}aussian processes: non-commutative and classical aspects, 
\ Comm.\ Math.\ Phys., 185(1), (1997), 129--154.

\bibitem[BSp91]{BS91} M.\ Bo\.zejko and R.\ Speicher, An example of a generalized Brownian motion,  Commun.\ Math.\ Phys.\ 137 (1991), 519--531. 

\bibitem[BSp94]{BS94} M.\ Bo\.zejko and R.\ Speicher, Completely positive maps on Coxeter groups, deformed commutation relations, and operator spaces, Math.\ Ann.\ 300 (1994), 97--120. 

\bibitem[BSz03]{BSz03} M.\ Bo\.zejko and R.\ Szwarc, Algebraic length and Poincar\'e series on reflection groups with applications to representations theory, in Asymptotic Combinatorics with Applications to Mathematical Physics, Lecture Notes in Math.\ 1815 (2003), 201--221. 

\bibitem[BW01]{BW01} M.\ Bo\.zejko and J.\ Wysocza\'{n}ski, Remarks on $t$-transformations of measures and convolutions, Ann.\ Inst.\ Henri Poincar\'e-PR 37(6) (2001), 737--761.

\bibitem[BW05]{BW05} W.\ Bryc and J.\ Weso\l owski, Conditional moments of $q$-Meixner processes, Probab.\ Theory Related Fields 131 (2005), no.\ 3, 415--441. 

\bibitem[BW14]{BW14} W.\ Bryc and J.\ Weso\l owski, Infinitesimal generators of {$q$}-{M}eixner processess, Stochastic Process. Appl. 124 (2014), no.\ 1, 915--926. 

\bibitem[Car89]{C89} R.W.\ Carter,  Simple Groups of Lie Type,  John Wiley \& Sons,  London-New York-Sydney, 1989.

\bibitem[GS14]{GS14} A.\ Guionnet and D.\ Shlyakhtenko, Free monotone transport, Invent.\ Math.\ 196 (2014), 613--661. 


\bibitem[HO07]{HO07} A.\ Hora and N.\ Obata, Quantum Probability and Spectral Analysis of Graphs, Theoretical and Mathematical Physics, Springer, Berlin, 2007. xviii+371 pp.

\bibitem[Hum90]{Hum90} J.E.\ Humphreys, Reflection Groups and Coxeter Groups, Cambridge studies in advances math.\ 29, Cambridge University Press, Cambridge, 1990. 

\bibitem[Rei97]{R97} V.\ Reiner, Non-crossing partitions for classical reflection groups, Discrete Math.\ 177 (1997), 195--222.  

\bibitem[Ric05]{Ric05} \'E.\ Ricard, Factoriality of q-Gaussian von Neumann Algebras, Comm.\ Math.\ Phys.\ 257 (2005),659--665. 

\bibitem[Ric06]{Ric06} \'E.\ Ricard, The von Neumann algebras generated by $t$-Gaussians, Ann.\ Inst.\ Fourier 56 (2006), no.\ 2, 475--498. 



\bibitem[Stu00]{S00} F.\ Stumbo, Minimal length coset representatives for quotients of parabolic subgroups in Coxeter groups, Bul.\ Un.\ Math.\ Ital.\ Serie 8, 3-B (2000), No.\ 3, 699--715. 

\bibitem[Wys06]{Wys06} J.\ Wysocza\'nski, The von Neumann algebra associated with an infinite number of $t$-free noncommutative Gaussian random variables, Quantum probability, 435--438, Banach Center Publ.\ 73, Polish Acad.\ Sci.\ Inst.\ Math., Warsaw, 2006. 


\bibitem[Voi85]{V85} D.\ Voiculescu, Symmetries of some reduced free product $C^\ast$ algebras, Operator algebras and their connections with topology and ergodic theory, Lect.\ Notes in Math.\ 1132, Springer, Berlin (1985), 556--588. 
 
\end{thebibliography}
\end{document}